\documentclass[12pt]{article}
\usepackage[utf8]{inputenc}
\usepackage[a4paper, total={6in, 8in}, left=1.134in]{geometry}
\usepackage{amsmath}
\usepackage{amssymb}
\usepackage{amsthm}
\usepackage{hyperref}
\usepackage{tikz-cd}
\hypersetup{colorlinks=true,linkcolor=black,citecolor=black}

\newtheorem{theorem}{Theorem}[section]
\newtheorem{lemma}[theorem]{Lemma}
\newtheorem{proposition}[theorem]{Proposition}
\newtheorem{corollary}[theorem]{Corollary}
\newtheorem{definition}[theorem]{Definition}
\newtheorem{remark}[theorem]{Remark}

\newtheorem{question}[theorem]{Question}
\numberwithin{equation}{section}

\newcommand{\R}{\mathbb{R}}

\newcommand{\Q}{\mathbb{Q}}
\newcommand{\C}{\mathbb{C}}
\newcommand{\N}{\mathbb{N}}
\renewcommand{\P}{\mathbb{P}}

\newcommand{\OO}{\mathcal{O}}
\newcommand{\T}{\mathcal{T}}

\renewcommand{\d}{\partial}
\newcommand{\dbar}{\overline{\partial}}

\newcommand{\e}{\varepsilon}
\newcommand{\II}{\mathrm{II}}

\DeclareMathOperator{\Ric}{Ric}
\DeclareMathOperator{\Rm}{Rm}
\DeclareMathOperator{\tr}{tr}

\DeclareMathOperator{\Kod}{Kod}
\DeclareMathOperator{\ch}{ch}
\DeclareMathOperator{\td}{td}
\DeclareMathOperator{\Vol}{Vol}

\DeclareMathOperator{\Imag}{Im}

\DeclareMathOperator{\id}{id}

\DeclareMathOperator{\rank}{rank}
\DeclareMathOperator{\codim}{codim}

\newcommand{\oo}[1]{\overline{#1}}
\newcommand{\parabolic}[1]{\left(\frac{\d}{\d t} - \Delta_{#1}\right)}

\newcommand{\norm}[1]{\left\|#1\right\|}
\newcommand{\ip}[1]{\left\langle#1\right\rangle}

\renewcommand{\div}{\textnormal{div}}

\begin{document}

\title{On Miyaoka-Yau Inequalities and Weil-Petersson Metrics}

\author{Alexander Bednarek\\The University of Sydney\\ \texttt{alexander.bednarek@sydney.edu.au}}

\date{\today}
\maketitle

\begin{abstract}
\noindent We use the K\"ahler-Ricci flow to give alternate proofs of several known Miyaoka-Yau inequalities and slope semi-stabilities, in particular, for the case of compact K\"ahler manifolds with semi-ample canonical line bundles and K-semistable Fano manifolds. Moreover, when the canonical line bundle is semi-ample, and the Kodaira dimension is $n-1$, we prove the Miyaoka-Yau quantity is equal to the intersection of the Weil-Petersson metric and the Fubini-Study metric on the canonical model. Consequently, equality holds in the Miyaoka-Yau inequality if and only if the pluricanonical map is a holomorphic fibre bundle.
\end{abstract}

\section{Introduction}

\noindent The classical, \cite{M77,Y77}, Miyaoka-Yau inequality states that for a compact K\"ahler-Einstein manifold, $(X,\omega_{KE})$,
\begin{equation*}
    (2(n+1)c_2(X) - n c_1(X)^2 ) \cdot [\omega_{KE}]^{n-2} \geq 0
\end{equation*}
and equality holds if and only if $\omega_{KE}$ has constant holomorphic bisectional curvature. Then, by the Uniformization Theorem, the universal cover $(\tilde{X}, \tilde{\omega}_{KE})$ is isometric, up to scaling, to either $(\P^n, \omega_{FS})$, $(\C^n, \omega_{Euc})$ or $(\mathbb{B}^n, \omega_{Poin})$, depending on the sign of the Einstein constant, \cite{T00}, where $\omega_{FS}, \omega_{Euc}, \omega_{Poin}$ denote the Fubini-Study, Euclidean, and Poincare metrics, respectively.\\

\noindent In the past decade there has been much effort directed towards extending the Miyaoka-Yau inequality to settings which do not admit a K\"ahler-Einstein metric and understanding geometrically when equality holds. It has been extended to smooth, projective minimal models of general type by Zhang, \cite{Z09}, followed by projective minimal models of general type with klt singularities in \cite{GKPT19} and then to normal, projective minimal models with terminal singularities in \cite{GT22}. Then Liu, \cite{L23}, proved a Miyaoka-Yau inequality for any smooth K\"ahler manifold with nef canonical line bundle, and, most recently, there has been an extension to compact K\"ahler spaces klt singularities and nef canonical sheaf, \cite{ZZZ25}.\\

\noindent For Fano manifolds, Song-Wang, \cite{SW16}, proved that a Miyaoka-Yau inequality holds when a Ricci lower bound condition is satisfied, and Li, \cite{L21}, proved the inequality holds when the manifold is K-semistable. Most recently, Hisamoto, \cite{H24}, proved for projective manifolds with nef anti-canonical line bundle that if the limit superior of the $\delta$-invariant is strictly greater than $1$, the Miyaoka-Yau inequality holds again, and in \cite{IJZ25} there is an extension to compact K\"ahler varieties with klt singularities and nef anti-canonical divisor.\\

\noindent We prove the following inequalities using the K\"ahler-Ricci flow. The first two inequalities are contained in \cite{L23}, \cite{L21}, respectively, but were proven using other methods.

\begin{theorem}
Let $X$ be a compact K\"ahler manifold. If the canonical line bundle, $K_X$, is semi-ample then
\begin{equation*}
    (2(n+1) c_2(X) - n c_1(X)^2) \cdot [\omega_0]^{n-i-2} \cdot (-2 \pi c_1(X))^i \geq 0,
\end{equation*}
for all K\"ahler classes $0 < [\omega_0] \in H^{1,1}(X,\R)$, where $i = \min\{\Kod(X), n-2\}$. If $X$ is a K-semistable Fano manifold then
\begin{equation*}
   (2(n+1) c_2(X) - n c_1(X)^2) \cdot (2 \pi c_1(X))^{n-2} \geq 0.
\end{equation*}
If $X$ is a Fano manifold, $V$ is a non-trivial holomorphic vector field whose imaginary part, $\Imag V$, induces an $S^1$-action on $X$, and the $V$-modified Mabuchi K-energy is bounded below on $2 \pi c_1(X)$ then, for some constant $C_V > 0$ defined by (\ref{divergence_norm}), we have
\begin{equation*}
    (2(n+1) c_2(X) - n c_1(X)^2) \cdot c_1(X)^{n-2} \geq -\frac{C_V(n+2)}{n(n-1)} c_1(X)^n.
\end{equation*}
\end{theorem}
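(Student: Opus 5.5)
The approach is to run the normalized Kähler--Ricci flow and pass to the limit in a Chern--Weil representation of the Miyaoka--Yau class.

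For any Kähler metric $\omega$, write the Riemann curvature as its trace part plus its traceless part. Then the Chern--Weil representative of $(2(n+1)c_2 - nc_1^2)\wedge\omega^{n-2}$ equals a nonnegative multiple of $|\mathring{\Rm}|^2\omega^n$ (the norm of the traceless part of the curvature), minus a multiple of $(|\Ric|^2 - R^2/n)\omega^n$. More precisely, I expect an identity of the form
\begin{equation*}
\big(2(n+1)c_2 - nc_1^2\big)\wedge \frac{\omega^{n-2}}{(n-2)!} = c_n\Big(|\Rm^0|^2 - \tfrac{n+2}{n}\,|\Ric^0|^2\Big)\frac{\omega^n}{n!},
\end{equation*}
with $c_n>0$. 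Here $\Rm^0$ is the part of the curvature orthogonal to the constant holomorphic sectional curvature tensor, and $\Ric^0$ is the traceless Ricci tensor. Hence it suffices to produce metrics in the relevant classes whose traceless Ricci curvature tends to zero in $L^2$.

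The mixed classes $[\omega_0]^{n-i-2}\cdot(-2\pi c_1)^i$ are handled the same way: fix a smooth representative $\omega_0$ and evaluate
\begin{equation*}
\int \Theta\wedge\omega_0^{\,n-i-2}\wedge\omega_t^{\,i}.
\end{equation*}
Alternatively, one uses the multilinear version of the curvature inequality pointwise with respect to $\omega_t$. Note that $[\omega_t]=e^{-t}[\omega_0]+(1-e^{-t})(-2\pi c_1)$, so the classes converge as $t\to\infty$.

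\textbf{Semi-ample $K_X$.} Run $\partial_t\omega=-\Ric(\omega)-\omega$. By Song--Tian, the scalar curvature is uniformly bounded along the flow, and the integral estimate
\begin{equation*}
\int_0^\infty\!\!\int_X |\Ric+\omega|^2\,\omega_t^n\, dt\cdot(\text{weight})<\infty
\end{equation*}
holds; equivalently, $\int |\Ric^0|^2\omega_t^n\to0$ along a sequence of times. The key point is to expand $[\omega_t]^{n-2}$ binomially in $e^{-t}[\omega_0]$ and $-2\pi c_1$. Since the volume collapses like $e^{-(n-\kappa)t}$, the dominant term is the one with $\omega_0^{n-i-2}(-2\pi c_1)^i$. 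Multiplying by $e^{(n-i-2)t}$ and using that $\int|\Ric+\omega_t|^2\omega_t^n$ decays faster than the volume scale, the leading coefficient is $\geq 0$. This is the main obstacle: one needs the decay $\int_X|\Ric(\omega_t)+\omega_t|^2\omega_t^n = o(e^{-(n-\kappa)t})$. Here $\kappa=\Kod(X)$ and the expression $\Ric(\omega_t)+\omega_t$ only tends to zero in the base directions, while in the fibre directions it tends to the fibrewise Ricci-flat behaviour. I would try to get it from the parabolic Schwarz lemma together with the scalar curvature estimate $|R+\kappa|\to0$ of Song--Tian/Jian. Since $\tr_\omega(\Ric+\omega)=R+n$, one gets $|\Ric+\omega|^2$ bounded by a combination of $\Ric+\omega\geq -C e^{-t}$-type lower bounds and the trace. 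A one-sided bound plus trace control gives $L^1$ smallness, and the uniform curvature-free bound gives the $L^2$ statement. The choice $i=\min\{\kappa,n-2\}$ is exactly what makes this leading term survive.

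\textbf{Fano case.} Use the flow $\partial_t\omega=-\Ric+\omega$ on $2\pi c_1$, where the class is fixed. K-semistability implies that the Mabuchi energy is bounded below, hence (Phong--Song--Sturm--Weinkove type) that $\int|\nabla\dot\phi|^2$ and $\int|\Ric-\omega|^2\omega_t^n$ tend to $0$ along a sequence; here one uses Perelman's bounds on $R$ and $\dot\phi$. Inserting this into the identity above gives the second inequality. For the third, use the $V$-modified flow, whose solutions converge in the sense of $\Ric-\omega\to\mathcal L_V\omega$. Boundedness of the modified energy gives $\int|\Ric-\omega-\partial\bar\partial\theta_V|^2\to0$, so $\int|\Ric^0|^2\omega^n$ tends to at most $\int|\partial\bar\partial\theta_V|^2\omega^n$. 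This last quantity is controlled, via the $S^1$-invariance, by the quantity $C_V$ in (\ref{divergence_norm}) times the volume $c_1^n$. Tracking the normalization constants $\frac{n+2}{n(n-1)}$ from the identity then yields the stated bound.
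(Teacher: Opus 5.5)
\textbf{There is a genuine gap in the semi-ample part.} The estimate you single out as the main obstacle, $\int_X|\Ric\omega_t+\omega_t|^2\omega_t^n=o(e^{-(n-\kappa)t})$, is false. Since $\d_t\omega^n=-(R+n)\omega^n$, we have $\int_X(R+n)\,\omega_t^n=-\frac{d}{dt}\Vol(\omega_t)$. Also $\Vol(\omega_t)=c\,e^{-(n-\kappa)t}+O(e^{-(n-\kappa+1)t})$ with $c>0$, so $\int_X(R+n)\,\omega_t^n\approx(n-\kappa)\Vol(\omega_t)$. Then $|\Ric\omega+\omega|^2\ge\frac1n(R+n)^2$ and Cauchy--Schwarz give $\int_X|\Ric\omega_t+\omega_t|^2\omega_t^n\ge c'e^{-(n-\kappa)t}$. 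This is consistent with your own observation that $\Ric\omega_t+\omega_t$ does not become small in the fibre directions.

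It is also far more than you need. After multiplying $[\omega_t]^{n-2}$ by $e^{(n-i-2)t}$, it suffices that $e^{(n-i-2)t}\int_X|\Ric\omega_t+\omega_t|^2\omega_t^n\to0$ along some sequence $t_j\to\infty$. A bound of order $\Vol(\omega_t)$ already leaves a margin of $e^{-t}$ (indeed $e^{-2t}$ when $\kappa\le n-2$).

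The tools you propose would not get you there anyway. Your route relies on a Ricci lower bound $\Ric\omega_t+\omega_t\ge-Ce^{-t}\omega_t$, which is not available for this flow in general; Song--Tian only control $R$. Your alternative of integrating the curvature form against $\omega_0^{n-i-2}\wedge\omega_t^{i}$ also fails. The Chern--Weil identity produces $|\Rm^\circ|^2$ and $|\Ric^\circ|^2$ with definite signs only when one wedges with powers of the same metric.

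\textbf{The missing idea} is to integrate the evolution equation of the scalar curvature; this needs nothing beyond $|R|\le C$. From $(\d_t-\Delta)R=-(R+n)+|\Ric\omega+\omega|^2$ and $\d_t\omega^n=-(R+n)\omega^n$ one gets
\begin{equation*}
\int_X|\Ric\omega+\omega|^2\,\omega^n=\frac{d}{dt}\int_X R\,\omega^n+\int_X(R+1)(R+n)\,\omega^n .
\end{equation*}
After multiplying by $e^{(n-i-2)t}$, everything except a total time derivative is $O(e^{(n-i-2)t}\Vol(\omega_t))=O(e^{-t})$. Hence $\int_0^\infty e^{(n-i-2)t}\int_X|\Ric\omega+\omega|^2\omega^n\,dt<\infty$, which gives the sequence $t_j$. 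Together with $e^{(n-i-2)t}[\omega_t]^{n-2}\to\binom{n-2}{i}[\omega_0]^{n-i-2}\cdot(-2\pi c_1(X))^i$, this proves the first inequality. You wrote down an estimate of this shape with an unspecified weight, but you never derived it, and you then built the argument on a different, false estimate.

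\textbf{The Fano parts are fine.} The K-semistable case is essentially the paper's argument. The paper additionally uses the identity $\int_X|\Ric\omega-\omega|^2\omega^n=\int_X(R-n)^2\omega^n$ on $2\pi c_1(X)$, combined with PSSW's $\norm{R-n}_{C^0}\to0$.

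In the modified case you use the decomposition $\Ric^\circ=(\Ric\omega-\omega-i\d\dbar\theta_V)^\circ+(i\d\dbar\theta_V)^\circ$, together with $\int_X|\d\dbar u|^2\omega^n=\int_X(\Delta u)^2\omega^n$ and $\norm{R-n-\Delta\theta_V}_{C^0}\to0$. This is a cleaner substitute for the paper's evolution-equation computation with an $\e$-Cauchy--Schwarz, and it gives the same bound $\limsup_t\int_X|\Ric^\circ|^2\omega^n\le C_V\int_X(2\pi c_1(X))^n$. Note that it is this integration by parts, not $S^1$-invariance, that bounds $\int_X|\d\dbar\theta_V|^2\omega^n$.

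Finally, your guessed Chern--Weil identity has the wrong ratio. With $\Rm^\circ$ as you define it, the integrand is $\frac{1}{4\pi^2n(n-1)}\big((n+1)|\Rm^\circ|^2-(n+2)|\Ric^\circ|^2\big)\omega^n$. The constant $\frac{n+2}{n(n-1)}$ in the third inequality comes from exactly this coefficient.
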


\noindent Additionally, the Miyaoka-Yau inequality frequently gives rise to slope-stability results of certain holomorphic vector bundles due to similar proof methodologies. As elucidated by the Donaldson-Uhlenbeck-Yau theorem, \cite{UY86}, one reason that the notion of slope stability is of interest is due to its strong ties to the existence of canonical metrics. We are able to prove the following result regarding slope stability. 

\begin{theorem}
Let $X$ be a compact K\"ahler manifold. If the canonical line bundle, $K_X$, is nef then for any K\"ahler class, $[\omega_0]$, the holomorphic tangent bundle, $T^{1,0} X$, is $([\omega_0],K_X)$-slope-semistable. If $X$ is a K-semistable Fano manifold then the holomorphic tangent bundle, $T^{1,0} X$, is $-K_X$-slope-semistable. 
\end{theorem}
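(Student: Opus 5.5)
We prove both statements by running the Kähler-Ricci flow and passing to the limit in the Chern-Weil representative of the slope. Recall what $([\omega_0],K_X)$-slope-semistability means. For every coherent saturated subsheaf $\mathcal{F}\subset T^{1,0}X$ of rank $0<r<n$, the slope
\begin{equation*}
\mu(\mathcal{F}) = \frac{c_1(\mathcal{F})\cdot[\omega_0]^{n-1-i}\cdot(-2\pi c_1(X))^{i}}{r}
\end{equation*}
must satisfy $\mu(\mathcal{F})\le \mu(T^{1,0}X)$. Here the polarisation is the limit of the classes $[\omega(t)]$ along the flow. In the nef case we can equivalently test against $[\omega_t]^{n-1}$ with
\begin{equation*}
[\omega_t] = e^{-t}[\omega_0] + (1-e^{-t})(-2\pi c_1(X)),
\end{equation*}
which is Kähler for all $t$ by nefness. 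We then let $t\to\infty$, after normalising by the appropriate power of $e^{t}$ if $K_X$ is not big.

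\textbf{Nef case.} We run the normalised flow $\partial_t\omega = -\Ric(\omega)-\omega$ from $\omega_0$; by nefness of $K_X$ it exists for all time. Fix $\mathcal{F}$. Off an analytic set $S$ of codimension $\ge 2$, $\mathcal{F}$ is a holomorphic subbundle. By Chern-Weil with the induced metric and the Gauss-Codazzi formula, $c_1(\mathcal{F})\cdot[\omega_t]^{n-1}$ is computed by
\begin{equation*}
\frac{1}{2\pi}\int_X \tr_{\omega_t}\bigl(\pi_{\mathcal{F}}\, i\Theta(\omega_t)\bigr)\,\omega_t^{n}/n \;-\; \frac{1}{2\pi}\int_X |\II|^2\,\omega_t^n/n .
\end{equation*}
Integration across $S$ is harmless: the second fundamental form term has a sign, and the standard argument as in Uhlenbeck-Yau applies. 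The second term is $\le 0$. In the first, $\tr_{\omega}\Theta=\Ric^{1,0}$ acting as an endomorphism. Writing $\Ric(\omega_t)=-\omega_t+E_t$, the first term becomes
\begin{equation*}
-\frac{r}{2\pi}\Vol(\omega_t)+\text{error},
\end{equation*}
where the error is bounded by $C\int_X|\Ric(\omega_t)+\omega_t|_{\omega_t}\,\omega_t^n$. The same computation for $T^{1,0}X$, with no second fundamental form term, gives $\mu(T^{1,0}X)=-\Vol/2\pi$ plus an error of the same type. So semistability follows once we show
\begin{equation*}
\int_X|\Ric(\omega_t)+\omega_t|\,\omega_t^n = o\bigl(\Vol(\omega_t)\bigr),
\end{equation*}
with $\Vol$ suitably renormalised by $e^{(n-\kappa)t}$ when $K_X$ is not big.

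\textbf{Fano case.} For K-semistable Fano $X$ we use the normalised flow $\partial_t\omega=-\Ric(\omega)+\omega$ in $2\pi c_1(X)$. The polarisation is fixed, so the same Chern-Weil computation reduces semistability to
\begin{equation*}
\int_X|\Ric(\omega_t)-\omega_t|\,\omega_t^n\to 0 .
\end{equation*}

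\textbf{Main obstacle: the $L^1$ Ricci estimate.} This is the key step, and we would not aim for pointwise control. We use the Ricci potential $u_t$, with $\Ric-\omega=-i\partial\dbar u$, and the monotonicity of Perelman's $\mu$-functional, or of the Mabuchi energy. In the Fano case, K-semistability gives a lower bound on the Mabuchi energy. Since the energy decreases along the flow at rate $\int|\nabla u_t|^2\omega_t^n$, this gives
\begin{equation*}
\int_0^\infty\int_X|\nabla u_t|^2\,\omega_t^n\,dt<\infty .
\end{equation*}
Combining this with Perelman's uniform bounds and the monotone quantity $\int_X|\nabla u|^2$ (Phong-Song-Sturm-Weinkove type), we get $\int_X|\nabla u_t|^2\to 0$. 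We then obtain the $L^1$ (indeed $L^2$) Ricci estimate from the Bochner-type identity
\begin{equation*}
\int_X|\partial\dbar u|^2 = \int_X (\Delta u)^2 - \text{(curvature term)}
\end{equation*}
together with time-integrated bounds. Strictly speaking, only a sequence $t_j\to\infty$ is needed, so the integrated bound suffices.

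In the nef case the analogous input is the monotonicity of the twisted Mabuchi energy. It is bounded below because $\omega_t$ converges in cohomology to a nef class. This gives
\begin{equation*}
\int_0^\infty\int_X|R+n|\,\omega_t^n\,dt\lesssim 1
\end{equation*}
after normalisation, and from it a sequence along which the scalar-curvature deviation tends to zero. The $L^1$ norm of the traceless part is then controlled via
\begin{equation*}
\int_X|\Ric+\omega|^2\omega^n = \int_X (R+n)^2\omega^n + \text{topological terms}.
\end{equation*}
The topological terms are $c_1^2\cdot[\omega_t]^{n-2}$-type terms, which cancel in the limit by the cohomological formula for $[\omega_t]$. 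Making this cancellation work with the correct power of $e^{t}$ when $\Kod(X)<n$ is the delicate point.
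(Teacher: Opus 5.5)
Your Fano half is, once tidied, the paper's argument. With $\Ric\omega-\omega=-i\d\dbar u$ one has exactly $\int_X|\Ric\omega-\omega|^2\omega^n=\int_X(\Delta u)^2\omega^n=\int_X(R-n)^2\omega^n$; there is no curvature term for the mixed Hessian. What sends this to zero is not $\int_X|\nabla u_t|^2\omega_t^n\to 0$ plus time-integrated bounds, since these do not by themselves control $\Delta u$. It is the Phong--Song--Sturm--Weinkove theorem that a lower bound on the K-energy gives $\norm{R-n}_{C^0}\to 0$.

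The nef half, however, has a genuine gap. First, take $\nu=\nu(K_X)<n$; for merely nef $K_X$ the relevant exponent is the numerical dimension, not $\kappa$. Then your stated target $\int_X|\Ric\omega_t+\omega_t|\,\omega_t^n=o(\Vol(\omega_t))$ is false, because $|\Ric\omega+\omega|\geq n^{-1/2}|R+n|$ while
\begin{equation*}
\int_X(R+n)\,\omega_t^n=n\,e^{-t}\,([\omega_0]+2\pi c_1(X))\cdot[\omega_t]^{n-1}=(n-\nu+o(1))\Vol(\omega_t).
\end{equation*}
The right comparison is with the denominator of the normalised slope, $[\omega_0]\cdot[\omega_t]^{n-1}\sim e^{-(n-\nu-1)t}$. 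So what you need is $e^{(n-\nu-1)t}\int_X|\Ric\omega_t+\omega_t|\,\omega_t^n\to 0$ along some $t_j\to\infty$.

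More seriously, nothing in your plan delivers even this weaker estimate. Your identity is correct, and its topological part is explicitly $-n(n-1)e^{-2t}([\omega_0]+2\pi c_1(X))^2\cdot[\omega_t]^{n-2}$. That term is negligible, so the ``delicate point'' you flag is not where the difficulty lies. After Cauchy--Schwarz everything reduces to $\int_X(R+n)^2\omega_t^n=o(e^{-(n-\nu-2)t})$. This is immediate from a uniform bound $|R_{\omega_t}|\leq C$, which is available when $K_X$ is semi-ample (Song--Tian). It is not known along the flow when $K_X$ is merely nef, and this is exactly the obstruction the paper points out.

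Your substitute does not supply it. A twisted Mabuchi energy ``bounded below because $[\omega_t]$ converges to a nef class'' is not a justified statement. What you extract from it, $L^1$ control of $R+n$, is in fact free from $R\geq -n-Ce^{-t}$ and the formula above, and it does not help: the identity needs $L^2$ control of $R+n$. Nor can $L^1$ control of the trace bound $\int_X|\Ric\omega+\omega|\,\omega^n$, because $\Ric\omega+\omega$ has no sign.

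The paper sidesteps the flow in the nef case. It uses the cscK metrics $\omega_\e\in\e[\omega_0]-2\pi c_1(X)$ of Dyrefelt and Song, whose scalar curvature is the constant $n\lambda_\e$ with $\lambda_\e\to-\nu/n$. It combines these with (\ref{nef_L2_ricci_limit}); with $R$ constant, your own identity essentially gives that estimate, since then $\int_X(R+n)^2\omega_\e^n=n^2(1+\lambda_\e)^2\Vol(\omega_\e)$. It then concludes with the Gauss--Codazzi lemma and Cauchy--Schwarz as you intended. The flow version is only asserted when $K_X$ is semi-ample.
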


\noindent Note that the numerical dimension of $K_X$ can be arbitrary due to our normalization of the slope, Definition \ref{normalized_slope}, while the fact that K-semistability implies $-K_X$-slope-semistability of $T^{1,0} X$ for Fano manifolds has already been known, see \cite{L21,H24}.\\

\noindent Finally, we prove the following relation between the Weil-Petersson metric and the Miyaoka-Yau quantity.

\begin{theorem}
Let $X$ be a compact K\"ahler manifold with semi-ample canonical line bundle, $K_X$, smooth canonical model, $X_{can}$, and Kodaira dimension, $\Kod(X) = n-1$. Then
\begin{equation*}
    (2(n+1) c_2(X) - n c_1(X)^2) \cdot (-2 \pi c_1(X))^{n-2} = \frac{12(n+1)}{\pi} [\omega_{WP}] \cdot [\eta_{FS}]^{n-2}
\end{equation*}
where $\eta_{FS}$ is a scalar multiple of the Fubini-Study metric and $\omega_{WP}$ is the Weil-Petersson metric.
\end{theorem}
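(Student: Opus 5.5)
The plan is to compute the left-hand side by Chern--Weil theory along the normalized K\"ahler--Ricci flow $\partial_t \omega = -\Ric(\omega) - \omega$, and then pass to the limit $t\to\infty$. The flow is governed by the semi-ample fibration $f: X \to X_{can}$, where $\dim X_{can} = n-1$ and the generic fibres are elliptic curves. The class is $[\omega(t)] = e^{-t}[\omega_0] + (1-e^{-t})(-2\pi c_1(X))$. Since the intersection number is cohomological, I first rewrite
\begin{equation*}
(2(n+1)c_2 - nc_1^2)\cdot(-2\pi c_1)^{n-2}
\end{equation*}
as $\lim_{t\to\infty}$ of $\int_X (2(n+1)c_2(\omega(t)) - n c_1(\omega(t))^2)\wedge \omega(t)^{n-2}$. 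This holds up to terms of order $e^{-t}$, because $(-2\pi c_1)^{n-1}\neq 0$ while $(-2\pi c_1)^n=0$, so the $(n-2)$ power is exactly what survives. Next I use the standard pointwise identity expressing the Miyaoka--Yau form in terms of curvature norms,
\begin{equation*}
(2(n+1)c_2 - nc_1^2)\wedge\omega^{n-2} = \frac{c_n}{4\pi^2}\bigl(|\Rm^0|^2 - \text{(Ricci-traceless terms)}\bigr)\frac{\omega^n}{n!}.
\end{equation*}
Here $\Rm^0$ is the part of the curvature orthogonal to constant holomorphic sectional curvature, and the Ricci term enters through $|\Ric^\circ|^2$. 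This is presumably the same identity used for Theorem 1.1.

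The second step uses the known collapsing behaviour of the flow (Song--Tian, Fong--Zhang, Tosatti--Weinkove--Yang). As $t\to\infty$, $\omega(t)$ converges to $f^*\omega_{can}$ away from the singular fibres. Here $\omega_{can}$ on $X_{can}$ is the twisted K\"ahler--Einstein metric $\Ric(\omega_{can}) = -\omega_{can} + \omega_{WP}$. The fibres shrink at rate $e^{-t}$ and become flat. With the twisted equation, $-2\pi c_1(X)$ pulled down is represented by $\omega_{can}$, and $[\omega_{can}]$ equals $[\eta_{FS}]$, the restriction of a multiple of the Fubini--Study form under the pluricanonical embedding. The surviving top-degree integrand splits into a horizontal part and a mixed horizontal/vertical part. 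The horizontal curvature of the base metric contributes a cohomological quantity on $X_{can}$ which cancels against the $c_1^2$ term after fibre integration (each fibre has volume factor $e^{-t}$). The mixed part is the second fundamental form of the elliptic fibration, i.e.\ the derivative of the fibrewise flat metrics. Its fibre integral is by definition the Weil--Petersson form: for elliptic fibres, $\omega_{WP} = \sqrt{-1}\partial\dbar\log(\Imag\tau)$ up to the normalization.

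So the concrete computation is to push forward $c_2(\omega(t))\wedge\omega(t)^{n-2}$ and $c_1(\omega(t))^2\wedge\omega(t)^{n-2}$ to $X_{can}$ and identify the $t\to\infty$ limit as a multiple of $\omega_{WP}\wedge\omega_{can}^{n-2}$. The multiple should be $\tfrac{12(n+1)}{\pi}$, where the $12$ comes from the relation $\deg \lambda = \tfrac{1}{12}\deg j^*\OO(1)$ familiar from elliptic surfaces. Since $X_{can}$ is smooth, I expect the singular fibres to lie over a divisor only in a controlled way. A cleaner alternative is purely cohomological. Canonical bundle formula: $K_X = f^*(K_{X_{can}} + L)$, with $L$ the Hodge line bundle whose curvature is $\omega_{WP}$. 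Together with a Grothendieck--Riemann--Roch or Noether-type formula, $f_*c_2(X) = 12\,c_1(L) + (\text{terms in } K_{X_{can}})$, this computes $f_*c_2(X)$; then $c_1(X)^2=f^*(\cdot)$ has zero pushforward. That gives
\begin{equation*}
(2(n+1)c_2 - nc_1^2)\cdot(-2\pi c_1)^{n-2} = 2(n+1)\,f_*c_2 \cdot [\omega_{can}]^{n-2},
\end{equation*}
and one substitutes $f_*c_2 = 12\,c_1(L) = \tfrac{6}{\pi}[\omega_{WP}]$ under the paper's normalization.

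The main obstacle is justifying the limit or the pushforward near singular fibres, i.e.\ showing no curvature concentrates there. I would try first the cohomological route: prove $f_*c_2(X) = 12\,c_1(f_*K_{X/X_{can}})$ as classes on $X_{can}$ via GRR applied to $\OO_X$, using $R^1f_*\OO_X \cong (f_*K_{X/X_{can}})^\vee$. I would then verify that the Weil--Petersson current represents $2\pi c_1$ of this Hodge bundle, using that it has continuous local potentials when the base is smooth. The flow would only be used to identify $[\eta_{FS}]$ with $-2\pi c_1(X)$ pushed down.
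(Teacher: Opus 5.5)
Your cohomological route is exactly the paper's proof: GRR for $\OO_X$, relative duality $R^1f_*\OO_X\cong (f_*K_{X/X_{can}})^*$, and the observation that $c_1(\T_f)$ is rationally a pullback. Hence every term other than $2(n+1)c_2(X)\wedge f^*\eta_{FS}^{n-2}$ is the pullback of an $(n,n)$-class on $X_{can}$ and vanishes; the K\"ahler--Ricci flow material is not needed, since $[f^*\eta_{FS}]=-2\pi c_1(X)$ follows directly from $f^*\OO(1)=K_X^{\otimes k}$. The one input the paper cites rather than proves is $[\omega_{WP}]=2\pi c_1(f_*K_{X/X_{can}})$, and your proposed justification via continuous local potentials would not work: for a local generator $\Omega$ of $f_*K_{X/X_{can}}$, the potential $-\log\int_{X_s}|\Omega_s|^2$ is unbounded near nodal fibres (it behaves like $-\log(-\log|s|^2)$), and it has positive Lelong number along fibres with non-unipotent monodromy such as $I_0^*$, so the identity holds only when $\omega_{WP}$ is read as the full curvature current of the $L^2$ metric, not as the extension by zero of the form on the smooth locus.
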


\noindent Due to the semi-positivity of the Weil-Petersson metric, this provides a new proof of the Miyaoka-Yau inequality. We briefly comment that the Weil-Petersson metric is defined as the $L^2$-inner product of harmonic representatives of the Kodaira-Spencer map, and measures the infinitesimal variation of the complex structures among the fibres, \cite{T87, ST12,T19}. As such, we get the following new geometric characterization of equality in the Miyaoka-Yau inequality.

\begin{corollary}
Let $X$ be a compact K\"ahler manifold with semi-ample canonical line bundle, $K_X$, smooth canonical model, $X_{can}$, and Kodaira dimension, $\Kod(X) = n-1$. Then equality holds in the Miyaoka-Yau inequality if and only if the pluricanonical map, $f: X \to X_{can}$, is a holomorphic fibre bundle away from singular fibres.
\end{corollary}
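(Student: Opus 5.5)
The corollary follows from the preceding Theorem together with the pointwise semi-positivity of the Weil-Petersson form, once we show that the intersection number vanishes exactly when the Weil-Petersson form vanishes on the regular locus.

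\textbf{Setup.} Let $X_{can}^\circ \subset X_{can}$ denote the set of regular values of $f$, so that over $X_{can}^\circ$ the fibres are smooth Calabi-Yau manifolds of dimension one, i.e.\ elliptic curves. By the previous Theorem,
\begin{equation*}
(2(n+1)c_2(X) - n c_1(X)^2)\cdot(-2\pi c_1(X))^{n-2} = \frac{12(n+1)}{\pi}\,[\omega_{WP}]\cdot[\eta_{FS}]^{n-2}.
\end{equation*}
Equality in the Miyaoka-Yau inequality therefore means this intersection number vanishes.

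\textbf{Step 1: the intersection number is an integral.} The key analytic input is that the class $[\omega_{WP}]$ appearing in the Theorem is represented by a closed positive $(1,1)$-current. On $X_{can}^\circ$ this current is the smooth semi-positive form $\omega_{WP}$, and it carries no mass on $X_{can}\setminus X_{can}^\circ$. This should follow from the construction in the proof of the Theorem via the twisted K\"ahler-Einstein equation of Song-Tian, where $\omega_{WP}$ arises as a Ricci-type term and its potential is bounded. Granting this, $\eta_{FS}$ is smooth and positive on the smooth variety $X_{can}$, so
\begin{equation*}
[\omega_{WP}]\cdot[\eta_{FS}]^{n-2} = \int_{X_{can}^\circ} \omega_{WP}\wedge\eta_{FS}^{n-2}.
\end{equation*}
The integrand is a nonnegative top-degree form, since $\omega_{WP}\geq 0$ and $\eta_{FS}>0$. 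Because $\eta_{FS}$ is strictly positive, the integral vanishes if and only if $\omega_{WP}\equiv 0$ on $X_{can}^\circ$.

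\textbf{Step 2: $\omega_{WP}\equiv 0$ if and only if the fibration is locally trivial.} By definition, $\omega_{WP}$ is the $L^2$ norm of the harmonic representative of the Kodaira-Spencer class $\kappa(v)\in H^1(X_y, T_{X_y})$. Hence $\omega_{WP}=0$ on $X_{can}^\circ$ exactly when the Kodaira-Spencer map vanishes identically there. In that case the $j$-invariant of the fibres is locally constant, so it is constant on the connected set $X_{can}^\circ$. The Fischer-Grauert theorem then gives that $f|_{f^{-1}(X_{can}^\circ)}$ is a holomorphic fibre bundle.

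Conversely, if $f$ is a holomorphic fibre bundle over $X_{can}^\circ$, then the Kodaira-Spencer map vanishes. So $\omega_{WP}=0$ there, and by Step 1 the intersection number is zero.

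\textbf{Main obstacle.} The delicate point is Step 1: justifying that the cohomological pairing equals the integral over the regular part, with no contribution concentrated on the discriminant locus. I would extract this from the proof of the Theorem: $\omega_{WP}$ has bounded local potentials near $X_{can}\setminus X_{can}^\circ$, and Bedford-Taylor theory implies that the Monge-Amp\`ere-type product $\omega_{WP}\wedge\eta_{FS}^{n-2}$ puts no mass on a pluripolar (here analytic) set.
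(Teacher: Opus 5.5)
Your route is the paper's: combine the preceding Theorem with the semi-positivity of $\omega_{WP}$, and use that $\omega_{WP}\equiv 0$ on the regular locus $X_{can}^\circ$ exactly when $f$ is locally trivial there. The paper cites Tosatti's survey for that last fact; your Kodaira--Spencer plus Fischer--Grauert argument is fine.

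The direction ``equality $\Rightarrow$ bundle'' is sound, and it does not need your Step 1. Let $T$ be any closed positive current in $[\omega_{WP}]=2\pi c_1(f_*K_{X/X_{can}})$ with $T|_{X_{can}^\circ}=\omega_{WP}$. Then $[\omega_{WP}]\cdot[\eta_{FS}]^{n-2}=\int_{X_{can}}T\wedge\eta_{FS}^{n-2}\geq\int_{X_{can}^\circ}\omega_{WP}\wedge\eta_{FS}^{n-2}\geq 0$, so equality forces $\omega_{WP}\equiv 0$ on $X_{can}^\circ$.

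\textbf{The gap is Step 1}, which is exactly what the converse needs. Your justification is not available. The paper proves the Theorem by a purely cohomological Grothendieck--Riemann--Roch computation, with no twisted K\"ahler--Einstein equation and no potential estimates.

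The claim itself is also false in general. The natural representative is the curvature current of the $L^2$ metric on $f_*K_{X/X_{can}}$, and its potentials are unbounded:
\begin{itemize}
\item Near a semistable $I_b$ fibre, the norm of a frame is comparable to $-\log|s|^2$. The potential is unbounded but still charges nothing.
\item Near fibres with non-unipotent local monodromy, the current acquires a divisorial part. For an $I_0^*$ fibre, locally $(E\times\Delta_t)/\pm1$ with $s=t^2$, the frame $dz/(2t)$ has norm comparable to $|s|^{-1}$. This produces the term $\pi\,\delta_0$.
\end{itemize}

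This gives a global counterexample. Let $C'\to C$ be a double cover branched at $2k$ points with $2g(C)-2+k>0$, and let $X$ be the minimal resolution of $(E\times C')/\langle(-1,\iota)\rangle$. Then:
\begin{itemize}
\item $n=2$ and $\Kod(X)=1$;
\item $K_X=f^*(K_C+L)$ with $\deg L=k$, so $K_X$ is semi-ample and $X_{can}=C$ is smooth;
\item $f$ is an $E$-bundle off the $2k$ fibres of type $I_0^*$, so $\omega_{WP}\equiv 0$ on $X_{can}^\circ$;
\item yet $6c_2(X)-2c_1(X)^2=6e(X)=72k>0$.
\end{itemize}
Here $2\pi c_1(f_*K_{X/C})$ is represented by $\pi\sum_j\delta_{p_j}$, supported entirely on the discriminant. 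So the ``if'' direction fails as stated.

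The paper's one line ``If $\omega_{WP}=0$ then equality holds'' makes the same hidden assumption. This is therefore a defect of the statement, not only of your proposal. Since a closed positive $(1,1)$-current never charges analytic sets of codimension $\geq 2$, only the divisorial components of the discriminant matter. The converse needs a hypothesis ensuring the $L^2$-metric current charges none of them. Unipotent local monodromy is one such hypothesis, since then the metric has only $\log\log$ singularities. Alternatively, the conclusion must be phrased as the vanishing of that whole current, not merely of $\omega_{WP}$ on $X_{can}^\circ$.
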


\noindent For arbitrary Kodaira dimension, we have the following extension.

\begin{theorem}
Let $X$ be a compact K\"ahler manifold with semi-ample canonical line bundle, $K_X$, smooth canonical model, $X_{can}$ and Kodaira dimension, $0 < \kappa \leq n-2$. If $[\omega_0] = 2 \pi c_1(L_0)$ for some ample $\Q$-line bundle $L_0$, then
\begin{equation*}
    (2(n+1) c_2(X) - n c_1(X)^2) \cdot c_1(L_0)^{n-\kappa-2} \cdot (-2 \pi c_1(X))^\kappa = 24(n+1) (n-\kappa-2)! r_{n-\kappa-2} [\eta_{FS}]^\kappa
\end{equation*}
where $r_{n-\kappa-2}$ is the coefficient of $j^{n-\kappa-2}$ in the polynomial $\rank(f_*(K_{X/X_{can}} \otimes L_0^j))$ for sufficiently large $j \geq 1$.
\end{theorem}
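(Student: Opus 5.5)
The plan is to compute the left-hand side by running the normalized Kähler-Ricci flow $\partial_t \omega = -\Ric(\omega) - \omega$ from $\omega_0 \in 2\pi c_1(L_0)$ and reading off the leading-order behaviour as $t \to \infty$. Along the flow, $[\omega(t)] = e^{-t}[\omega_0] + (1-e^{-t})(-2\pi c_1(X))$. Since $c_2$ and $c_1^2$ are topological, the quantity
\[
(2(n+1)c_2(X) - n c_1(X)^2)\cdot[\omega(t)]^{n-2}
\]
is a polynomial in $e^{-t}$. Its coefficient of $e^{-(n-\kappa-2)t}$ is $\binom{n-2}{\kappa}$ times the left-hand side of the theorem, up to the factor $(2\pi)^{n-\kappa-2}$. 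All lower powers of $e^{-t}$ vanish, because $c_1(K_X)^{j}=0$ for $j>\kappa$.

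So it suffices to compute the limit of
\[
e^{(n-\kappa-2)t}\int_X \bigl(2(n+1)c_2 - n c_1^2\bigr)(\omega(t))\wedge\omega(t)^{n-2}
\]
using Chern–Weil forms of $\omega(t)$ itself. By the standard pointwise identity, the Miyaoka–Yau integrand equals a nonnegative multiple of $|\mathring{\Rm}|^2$, the norm of the trace-free part of the curvature relative to constant holomorphic sectional curvature, minus a multiple of $|\Ric^\circ|^2$, times $\omega^n$.

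Next I would use the known asymptotics of the flow for semi-ample $K_X$ (Song–Tian, Tosatti–Weinkove–Yang, and the collapsing results presumably established earlier in the paper). These give:
\begin{itemize}
\item $\omega(t) \to f^*\omega_{can}$ away from singular fibres;
\item the restrictions $e^{t}\omega(t)|_{X_y}$ converge to the Ricci-flat metrics $\omega_{SRF,y}$ on the fibres;
\item $\omega_{can}$ satisfies $\Ric(\omega_{can}) = -\omega_{can} + \omega_{WP}$ on $X_{can}^\circ$.
\end{itemize}
Rescaling by $e^{(n-\kappa-2)t}$ exactly compensates the fibre volume collapse $e^{-(n-\kappa)t}$, together with the two powers of $\omega$ absorbed into the curvature quadratic.

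In the limit, the integral therefore splits, via the model product metric $f^*\omega_{can} + e^{-t}\omega_{SRF}$, into two pieces:
\begin{itemize}
\item a fibrewise term $\int_{X_y}(2(n+1)c_2 - nc_1^2)(\omega_{SRF,y})\wedge\omega_{SRF,y}^{n-\kappa-2}$, integrated against $\omega_{can}^\kappa$;
\item mixed terms coming from horizontal curvature and the Weil–Petersson form.
\end{itemize}
Since the fibres are Calabi–Yau, $c_1$ vanishes on them. The fibre term becomes $2(n+1)\int_{X_y} c_2(X_y)\cdot c_1(L_0|_{X_y})^{n-\kappa-2}$. By Riemann–Roch on a Calabi–Yau fibre, $\chi(X_y, L_0^j) = \int \td(X_y) e^{jc_1(L_0)}$, so its $j^{n-\kappa-2}$ coefficient is $\frac{1}{12}\int c_2\, c_1(L_0)^{n-\kappa-2}/(n-\kappa-2)!$. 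Higher cohomology vanishes for large $j$, and $f_*(K_{X/X_{can}}\otimes L_0^j)$ restricted to a fibre is $H^0(X_y, L_0^j)$ because $K_{X_y}$ is torsion. Hence that coefficient is $r_{n-\kappa-2}$. This produces the factor $24(n+1)(n-\kappa-2)!\,r_{n-\kappa-2}$ (the $24 = 2\cdot 12$), multiplied by $\int_{X_{can}}\omega_{can}^\kappa = [\eta_{FS}]^\kappa$, with $\eta_{FS}$ the pullback normalization of $-2\pi c_1(X)$ on $X_{can}$.

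\textbf{Main obstacle.} The hardest step is to justify that the mixed contributions, including the Weil–Petersson terms, vanish at order $e^{-(n-\kappa-2)t}$, and that the limit may be exchanged with the integral. Here the contrast with the $\Kod = n-1$ theorem is that $\omega_{WP}$ pairs with the horizontal part. When the fibre dimension is at least $2$, the $\omega^{n-2}$ factor carries too many horizontal directions to pair with $\omega_{WP}$ at leading order. Concretely, the horizontal $(n-2)$-form must contain at least $\kappa-1$ base factors, so a WP contribution lands at order $e^{-(n-\kappa-1)t}$, one step lower.

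To make this rigorous I would:
\begin{itemize}
\item avoid pointwise curvature limits by working cohomologically, writing $c_2(X)$ restricted near a smooth fibre via the exact sequence $0\to T_{X/X_{can}}\to T_X\to f^*T_{X_{can}}\to 0$, so that $c_2(X) = c_2(T_{X/X_{can}}) + c_1(T_{X/X_{can}})c_1(f^*T_{X_{can}}) + f^*c_2(X_{can})$;
\item expand the products with $c_1(L_0)^{n-\kappa-2}\cdot f^*[\omega_{can}]^\kappa$, where any term with more than $\kappa$ base degrees dies;
\item control the singular fibres by a cutoff argument using the uniform curvature/volume estimates already available.
\end{itemize}
With this approach, the flow is then only needed to interpret the terms geometrically.
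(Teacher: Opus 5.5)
Your cohomological fallback is a correct proof, but by a different route from the paper's. The Kähler--Ricci flow framing around it should be dropped rather than made rigorous.

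\textbf{The flow part.} The paper proves no fibrewise curvature asymptotics for $\omega(t)$. The only uniform flow estimate available here is Song--Tian's scalar curvature bound, and the full curvature of a collapsing flow is in general unbounded. So the ``cutoff argument using the uniform curvature/volume estimates'' in your last bullet has nothing to stand on. It is also unnecessary.

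\textbf{The localisation, done topologically.} Since $c_1(X)^{\kappa+1}=0$, the left-hand side equals $2(n+1)\int_X \alpha\wedge f^*\eta_{FS}^\kappa$, where $\alpha$ is a closed representative of $c_2(X)\cdot c_1(L_0)^{n-\kappa-2}$. Off the proper analytic set of critical values of $f$, $f$ is a proper submersion. That set is null, and its complement in the irreducible $X_{can}$ is connected. Hence $y\mapsto \int_{X_y}\alpha$ is constant there, and $\int_X\alpha\wedge f^*\eta_{FS}^\kappa=[\eta_{FS}]^\kappa\int_{X_y}\alpha$ for any regular fibre $X_y$. No estimates are needed.

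\textbf{The fibre computation.} On a regular fibre the normal bundle $f^*T_{X_{can}}|_{X_y}$ is trivial. Hence $c_2(X)|_{X_y}=c_2(X_y)$. Also $K_{X_y}=K_X|_{X_y}$ is torsion, so $c_1(X_y)=0$ in $H^2(X_y,\Q)$.

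One step needs correcting. The fibre of $f_*(K_{X/X_{can}}\otimes L_0^j)$ at a regular $y$ is $H^0(X_y,K_{X_y}\otimes L_0^j)$, not $H^0(X_y,L_0^j)$. By Kodaira vanishing on $X_y$, its dimension is $\chi(X_y,K_{X_y}\otimes L_0^j)=\int_{X_y}e^{jc_1(L_0)}\td(X_y)$. This is constant over the regular locus, so it equals the rank by Grauert. Its $j^{n-\kappa-2}$-coefficient is $\frac{1}{12(n-\kappa-2)!}\int_{X_y}c_2(X_y)\,c_1(L_0)^{n-\kappa-2}$, which gives the formula.

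\textbf{Comparison with the paper.} The paper works globally with classes on $X_{can}$.
\begin{itemize}
\item It applies relative Grothendieck--Riemann--Roch for holomorphic maps between complex manifolds to $L_0^{-j}$; this is where smoothness of $X_{can}$ is used.
\item Relative Serre duality and relative Kodaira vanishing then identify $\ch_0(Rf_*L_0^{-j})$ with $(-1)^{n-\kappa}\rank f_*(K_{X/X_{can}}\otimes L_0^j)$.
\item This yields $r_{n-\kappa-2}=\frac{1}{(n-\kappa-2)!}f_*\bigl(\td_2(\T_f)\wedge c_1(L_0)^{n-\kappa-2}\bigr)$.
\item Finally, since $c_1(\T_f)$ is pulled back from the base, $c_2(X)\wedge f^*\eta_{FS}^\kappa=12\,\td_2(\T_f)\wedge f^*\eta_{FS}^\kappa$.
\end{itemize}

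Your route observes that only the degree-zero part of GRR on the base matters here, and that this part is just Hirzebruch--Riemann--Roch on one regular fibre. So you avoid GRR for analytic maps and the relative duality and vanishing theorems. You use smoothness of $X_{can}$ essentially only to make sense of $K_{X/X_{can}}$.

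What the paper's global formalism buys is uniformity with the $\Kod(X)=n-1$ case. There the relevant term $\ch_1(Rf_*\OO_X)$ is a genuine degree-two class on the base, namely the Weil--Petersson class, and it cannot be read off from a single fibre.
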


\noindent \textbf{Acknowledgements:} The author would like to thank Tiernan Cartwright, Minghao Miao, Yanir Rubinstein, Yuguang Zhang, Xiaohua Zhu, and his supervisor, Zhou Zhang, for their interest and comments.

\section{Miyaoka-Yau Inequalities}

\subsection{Semi-Ample Canonical Line Bundle}

\noindent In this subsection we present a proof of the following theorem via the K\"ahler-Ricci flow.

\begin{theorem}\label{semi_ample_MY}
Let $X$ be a compact K\"ahler manifold with semi-ample $K_X$. Then
\begin{equation*}
    (2 (n+1) c_2(X) - n c_1(X)^2) \cdot [\omega_0]^{n-i-2} \cdot (-2\pi c_1(X))^{i} \geq 0
\end{equation*}
for all $0 < [\omega_0] \in H^{1,1}(X,\R)$, where $n = \dim X$ and $\kappa = \Kod(X)$ satisfies $0 < \kappa < n$, and $i = \min\{\kappa,n-2\}$.
\end{theorem}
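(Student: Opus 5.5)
We will run the normalized K\"ahler-Ricci flow
\begin{equation*}
    \frac{\d \omega(t)}{\d t} = -\Ric(\omega(t)) - \omega(t), \qquad \omega(0) = \omega_0,
\end{equation*}
which exists for all time because $K_X$ is nef. Its class is $[\omega(t)] = e^{-t}[\omega_0] + (1-e^{-t})(-2\pi c_1(X))$. The plan is to express the Miyaoka-Yau quantity as a Chern-Weil integral against $\omega(t)$. Then we show the integral is bounded below by an error term that vanishes as $t \to \infty$, and extract the correct power of $e^{-t}$.

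\textbf{Step 1: Chern-Weil identity.} For any K\"ahler metric $\omega$, the standard pointwise decomposition of the curvature tensor into its trace-free and trace parts, as in \cite{Y77}, gives
\begin{equation*}
    (2(n+1)c_2 - n c_1^2)\wedge \omega^{n-2} = \frac{C_n}{(2\pi)^2}\Big(|\Rm^0|^2 - \lambda_n |\Ric^0|^2\Big)\omega^n .
\end{equation*}
Here $\Rm^0$ is the part of $\Rm$ orthogonal to constant holomorphic bisectional curvature, $\Ric^0$ is the trace-free Ricci tensor, and $C_n, \lambda_n > 0$ are dimensional constants. Since $|\Rm^0|^2 \geq 0$, integrating gives
\begin{equation*}
    (2(n+1)c_2 - nc_1^2)\cdot[\omega(t)]^{n-2} \geq -C\int_X |\Ric^0(\omega(t))|^2\,\omega(t)^n .
\end{equation*}
Along the flow we have $\Ric^0 = \Ric + \omega - \frac{R+n}{n}\omega$. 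So the error is controlled by $\int_X |\Ric(\omega(t)) + \omega(t)|^2\omega(t)^n$.

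\textbf{Step 2: Decay of the error.} By Song-Tian/Zhang, the scalar curvature is uniformly bounded along the flow when $K_X$ is semi-ample. Also, $\omega(t)^n \sim e^{-(n-\kappa)t}\,\Omega$ for a fixed smooth volume form $\Omega$. The next step is to show
\begin{equation*}
    \int_X |\Ric + \omega|^2 \omega^n = o\big(e^{-(n-i)t}\big).
\end{equation*}
To do this, write $|\Ric+\omega|^2\omega^n$ via
\begin{equation*}
    (\Ric+\omega)^2\wedge\omega^{n-2} \quad\text{and}\quad (R+n)\,(\Ric+\omega)\wedge\omega^{n-1}.
\end{equation*}
The first term is cohomological, since $[\Ric+\omega] = e^{-t}([\omega_0] + 2\pi c_1(X))$. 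It is therefore $O(e^{-2t}e^{-(n-2-\kappa)t})$ when $\kappa \le n-2$, and smaller otherwise. For the second term, use $\frac{\d}{\d t}\log\frac{\omega^n}{\Omega}$ and the evolution of $R$. Integrating $\int_t^{t+1}$ together with a parabolic-Schwarz/Perelman-type estimate should give $\int_X (R+\kappa)\,\omega^n \to 0$ faster than the volume. This is the step I expect to be the main obstacle: the naive bound only gives $O(e^{-(n-\kappa)t})$, and gaining the extra factor needs the decay $R \to -\kappa$ in an averaged sense. First I would try the time-averaged integral $\int_t^{t+1}\!\int_X |\Ric+\omega|^2\omega^n\,ds$ computed from the evolution of $\log\frac{\omega^n}{\Omega}$, which makes the estimate cohomological up to boundary terms. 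It suffices to prove the inequality along a sequence $t_k \to \infty$.

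\textbf{Step 3: Extracting the coefficient.} Expand
\begin{equation*}
    (2(n+1)c_2 - nc_1^2)\cdot[\omega(t)]^{n-2} = \sum_{j} \binom{n-2}{j} e^{-(n-2-j)t}(1-e^{-t})^{j}\, A_j ,
\end{equation*}
where $A_j = (2(n+1)c_2 - nc_1^2)\cdot[\omega_0]^{n-2-j}\cdot(-2\pi c_1)^j$. Because $(-c_1)^{j} = 0$ in cohomology for $j > \kappa$, the terms with $j > i$ vanish. The leading term as $t\to\infty$ is therefore $\binom{n-2}{i} e^{-(n-2-i)t} A_i$. Divide the inequality of Step 1 by $e^{-(n-2-i)t}$ and use Step 2, noting that $n-i \ge n-2-i$ with room to spare thanks to the $e^{-2t}$ gain. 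Letting $t\to\infty$ then gives $A_i \geq 0$, which is the claim.
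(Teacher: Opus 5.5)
\textbf{Step 2 targets a false estimate.} The bound you set out to prove, $\int_X|\Ric+\omega|^2\omega^n=o(e^{-(n-i)t})$, does not hold, so the step you flag as the main obstacle cannot be carried out. Indeed $|\Ric+\omega|^2\ge\frac{1}{n}(R+n)^2$, while
\begin{equation*}
    \int_X (R+n)\,\omega^n = n\,e^{-t}\big([\omega_0]+2\pi c_1(X)\big)\cdot[\omega(t)]^{n-1} = (n-\kappa)\Vol(X,\omega(t))\big(1+O(e^{-t})\big)
\end{equation*}
is purely cohomological. Cauchy--Schwarz then gives $\int_X|\Ric+\omega|^2\omega^n\ge c\,e^{-(n-\kappa)t}$ for some $c>0$ and all large $t$. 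This is never $o(e^{-(n-i)t})$: for $\kappa=n-1$ your target would be $o(e^{-2t})$, whereas the integral is of size $e^{-t}$. Heuristically, $R\to-\kappa$ means $R+n\to n-\kappa\neq 0$. In the collapsing fibre directions $\Ric$ is negligible but $\omega$ is not, so $\Ric+\omega$ has $\omega(t)$-norm of order one there. The averaged decay of $R+\kappa$ you hope to prove does hold (again cohomologically). It is irrelevant, though, because what enters your decomposition is $\int_X(R+n)^2\omega^n$.

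\textbf{The gain is unnecessary, and the argument closes.} Step 3 only requires $e^{(n-2-i)t}\int_X|\Ric+\omega|^2\omega^n\to 0$, and the bound you call naive already gives this. For $\alpha=\Ric+\omega$ one has pointwise $|\alpha|^2\omega^n=(R+n)^2\omega^n-n(n-1)\,\alpha^2\wedge\omega^{n-2}$. The uniform bound on $R$ and the cohomological evaluation of the last term then give $\int_X|\Ric+\omega|^2\omega^n\le C e^{-(n-\kappa)t}$ for every $t$. After multiplying by $e^{(n-2-i)t}$ this still decays like $e^{-2t}$, or like $e^{-t}$ if $\kappa=n-1$. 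With Step 2 replaced by this, your argument is complete.

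It then differs from the paper's only in how the Ricci error is controlled. The paper uses $\parabolic{\omega}R=-(R+n)+|\Ric+\omega|^2$ and $\d_t\omega^n=-(R+n)\omega^n$ to write $e^{(n-\kappa-2)t}\int_X|\Ric+\omega|^2\omega^n$ as $\frac{d}{dt}\big(e^{(n-\kappa-2)t}\int_X R\,\omega^n\big)+O(e^{-2t})$. It integrates in time to get a finite space-time integral and passes to a sequence $t_i\to\infty$. Your pointwise algebraic identity avoids the evolution equations and the subsequence, and yields the decay at all times. Both routes rest on the same uniform scalar curvature bound of Song--Tian.
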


\noindent When $\kappa = n$, this result was proven by Zhang in \cite{Z09}. If $\kappa = 0$ then $X$ is Calabi-Yau and handled by Yau, \cite{Y77,K14}.\\

\noindent Taking $K_X$ to be semi-ample, and any initial K\"ahler metric $\omega_0$, the K\"ahler-Ricci flow
\begin{equation*}
    \frac{\d \omega}{\d t} = -\Ric \omega(t) - \omega(t), \quad \omega(0) = \omega_0
\end{equation*}
exists for all time, \cite{TZ06}. Moreover, a sufficiently large tensor power of the canonical line bundle, $K_X$, generates a proper, holomorphic map with connected fibres
\begin{equation*}
    f: X \to X_{can} \subseteq \P H^0(X,K_X^{\otimes k})
\end{equation*}
into projective space whose image, $X_{can}$, the canonical model, is an irreducible projective variety. We assume $0 < \kappa < n$ where $n = \dim X$, $\kappa = \Kod(X) = \dim X_{can}$. Next, note that since $f^*\OO(1) = K_X^{\otimes k}$ then $[f^*\eta_{FS}] = -2 \pi c_1(X)$ where we define $\eta_{FS} = \frac{1}{k} \omega_{FS}|_{X_{can}}$ and we can set $k$ to be the smallest sufficiently large positive integer which generates the map.\\

\noindent Note that the cohomology evolves along the K\"ahler-Ricci flow according to
\begin{equation*}
    [\omega(t)] = e^{-t} [\omega_0] + (1-e^{-t}) [f^*\eta_{FS}]
\end{equation*}
so that $C^{-1} e^{-(n -\kappa)t} \leq \Vol(X,\omega(t)) \leq C e^{-(n-\kappa)t}$.

\begin{proof}
Consider the case $0 < \kappa < n-2$. Firstly, note that by Chern-Weil theory, \cite{B87,K14},
\begin{align*}
    (2 (n+1) c_2(X) &- n c_1(X)^2) \cdot [\omega(t)]^{n-2}\\
    &= \frac{1}{4 \pi^2 n(n-1)}\int_X (n+1)|\Rm^\circ\omega(t)|^2 - (n+2)|\Ric^\circ\omega(t)|^2 \omega(t)^n
\end{align*}
where $\Rm^\circ$, $\Ric^\circ$, are the traceless Riemannian and Ricci curvatures given by
\begin{equation*}
    R^\circ_{i\oo{j}k\oo{l}} = R_{i\oo{j}k\oo{l}} - \frac{R}{n(n+1)}(g_{i\oo{j}}g_{k\oo{l}} + g_{i\oo{l}}g_{j\oo{k}}), \quad R^\circ_{i\oo{j}} = R_{i\oo{j}} - \frac{R}{n} g_{i\oo{j}}.
\end{equation*}
Therefore,
\begin{align*}
    (2 (n+1) c_2(X) &- n c_1(X)^2) \cdot e^{(n-\kappa-2)t}[\omega(t)]^{n-2}\\ &\geq -C(n)\int_X e^{(n-\kappa-2)t} |\Ric^\circ \omega(t)|^2 \omega(t)^n\\
    &\geq -C(n) \int_X e^{(n-\kappa-2)t}|\Ric \omega(t) + \omega(t)|^2 \omega(t)^n
\end{align*}
by applying
\begin{equation*}
    |\Ric \omega + \omega|^2  - |\Ric^\circ \omega|^2 = \frac{1}{n}(R + n)^2 \geq 0.  
\end{equation*}
Next, recall, \cite{Z09}, that the scalar curvature evolves along the flow according to
\begin{equation*}
    \parabolic{\omega(t)} R = R + |\Ric \omega|^2 = -(R+n) + |\Ric \omega + \omega|^2
\end{equation*}
and the volume form by
\begin{equation*}
    \d_t \omega^n = -(R+n)\omega^n.
\end{equation*}
Then we have
\begin{align*}
    \int_X e^{(n-\kappa-2)t}|\Ric \omega + \omega|^2 \omega^n
    &= e^{(n-\kappa-2)t} \int_X  (\d_t R +(R+n))\omega^n\\
    &= \frac{d}{dt} \int_X e^{(n-\kappa-2)t} R \omega^n - \int_X (n-\kappa-2) e^{(n-\kappa-2)t}R \omega^n\\
    &+ \int_X e^{(n-\kappa-2)t}R(R+n)\omega^n + \int_X e^{(n-\kappa-2)t} (R+n) \omega^n\\
    &\leq \frac{d}{dt} \int_X e^{(n-\kappa-2)t} R \omega^n  + Ce^{-2t}
\end{align*}
where the last line uses the uniform bound $\norm{R}_{C^0} \leq C$ for the K\"ahler-Ricci flow with semi-ample $K_X$, \cite{ST16}, in combination with the fact that $C^{-1} e^{-(n-\kappa)t} \leq [\omega(t)]^n \leq C e^{-(n-\kappa)t}$. Therefore
\begin{align*}
    \int_0^\infty \int_X e^{(n-\kappa-2)t} |\Ric \omega + \omega|^2 \omega^n dt \leq C,
\end{align*}
and so there exists a sequence $(t_i)_{i \in \N}$ such that $t_i \to \infty$ as $i \to \infty$ and 
\begin{equation*}
    \int_X e^{(n-\kappa-2)t_i} |\Ric \omega(t_i) + \omega(t_i)|_{\omega(t_i)}^2 \omega(t_i)^n \to 0
\end{equation*}
as $i \to \infty$. Finally, since $[\omega(t)] = e^{-t}[\omega_0] + (1-e^{-t}) [f^*\eta_{FS}]$ and $0 < \kappa < n - 2$, we have
\begin{align*}
    e^{(n-\kappa-2)t} [\omega(t)]^{n-2} &= e^{(n-\kappa-2)t} \sum_{i=0}^{n-2} {n-2 \choose i} e^{-(n-2-i)t} (1-e^{-t})^{i} [\omega_0]^{n-2-i} \cdot [f^*\eta_{FS}]^i\\
    &= \sum_{i=0}^{\kappa} {n-2 \choose i} e^{-(\kappa-i)t} (1-e^{-t})^{i} [\omega_0]^{n-2-i} \cdot [f^*\eta_{FS}]^i\\
    &\to {n-2 \choose \kappa} [\omega_0]^{n-\kappa-2} \cdot [f^*\eta_{FS}]^{\kappa}
\end{align*}
as $t \to \infty$. It is immediate that
\begin{align*}
     (2 (n+1) c_2(X) &- n c_1(X)^2) \cdot [\omega_0]^{n-\kappa-2} \cdot [f^*\eta_{FS}]^{\kappa}\\
    &= {n-2 \choose \kappa}^{-1}\lim_{i \to \infty} (2 (n+1) c_2(X) - n c_1(X)^2) \cdot e^{(n-\kappa-2)t_i}[\omega(t_i)]^{n-2}\\
    &\geq - C(\kappa,n) \lim_{i \to \infty} \int_X e^{(n-\kappa-2)t_i} |\Ric \omega(t_i) + \omega(t_i)|_{\omega(t_i)}^2 \omega(t_i)^n\\
    &= 0.
\end{align*}
Finally, consider the cases $\kappa = n-2, n-1$. It is clear that
\begin{equation*}
    \left(2(n+1) c_2(X) - n c_1(X)^2 \right)\cdot [\omega(t)]^{n-2} \geq -C(n) \int_X |\Ric \omega(t) + \omega(t)|^2 \omega(t)^n
\end{equation*}
as before, and again by the uniform scalar curvature bound,
\begin{align*}
    \int_X |\Ric \omega + \omega|^2 \omega^n &= \frac{d}{dt} \left(\int_X R \omega^n\right) + \int_X (R+1)(R+n) \omega^n\\
    &\leq \frac{d}{dt} \left(\int_X R \omega^n\right) + C e^{-t}.
\end{align*}
Therefore 
\begin{equation*}
    \int_0^\infty \int_X |\Ric \omega + \omega|^2 \omega^n dt \leq C
\end{equation*}
and the result follows as before since $[\omega(t)]^{n-2} \to [f^*\eta_{FS}]^{n-2}$ as $t \to \infty$.
\end{proof}

\noindent In fact, it is clear from the proof that
\begin{equation}\label{semi_ample_L2_ricci_limit}
    \int_0^\infty \int_X e^{(n-\kappa-1)t} |\Ric \omega + \omega|^2 \omega^n dt \leq C,
\end{equation}
but this does not enable a more optimal Miyaoka-Yau inequality than the stated one since the class $e^{(n-\kappa-1)t}[\omega(t)]^{n-2}$ blows up as $t \to \infty$.

\begin{remark}
We note that \cite{N21} claims to prove the Miyaoka-Yau inequality using the K\"ahler-Ricci flow for $K_X$ semi-positive which would include our setting, however there are two issues. Firstly, the estimate for the third term in \cite[Eq. (2.4)]{N21} contains an error and, secondly, is not optimal since the left-hand side of the inequality
\begin{equation*}
    (2 (n+1) c_2(X) - n c_1(X)^2) \cdot (-2 \pi c_1(X))^{n-2} \geq 0
\end{equation*}
is trivially $0$ whenever $0 < \kappa < n-2$.
\end{remark}

\noindent In fact, it has been proven by Liu, \cite{L23}, that when $K_X$ is merely nef, that Theorem \ref{semi_ample_MY} holds, where $\kappa$ is replaced with the numerical dimension of $-2\pi c_1(X)$. Recall that the numerical dimension of a nef class $[\alpha] \in H^{1,1}(X,\R)$ is defined to be 
\begin{equation*}
    \nu([\alpha]) = \max\{k \in \{0,\dots,n\}: [\omega_0]^{n-k} \cdot [\alpha]^k > 0\}
\end{equation*}
for any K\"ahler class $[\omega_0]$. Liu's technique relies on using cscK metrics to approximate the curvature terms appearing from Chern-Weil theory. The only reason we require semi-ampleness is that a uniform scalar curvature bound is currently unknown for the K\"ahler-Ricci flow when $K_X$ is only nef. There is no other dependence on the semi-ampleness of the canonical line bundle.\\

\noindent In section $3$ we prove the slope semi-stability of the holomorphic tangent bundle, $T^{1,0} X$, when $K_X$ is nef and we use the same approximation method with the cscK metrics. So take note of the following theorem of Dyrefelt and Song.

\begin{theorem}[\cite{D20},\cite{S20}]
Let $X$ be any compact K\"ahler manifold where $K_X$ is nef. Then for any K\"ahler class $[\omega_0]$, there exists an $\e_0 > 0$ such that $\e[\omega_0] - 2 \pi c_1(X)$ admits a unique cscK metric for all $0 < \e < \e_0$.
\end{theorem}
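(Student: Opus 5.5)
The statement is the theorem of Dyrefelt and Song, which the paper only cites. A proof would go through the variational / properness approach to cscK metrics. Fix the K\"ahler class $\Omega_\e = \e[\omega_0] - 2\pi c_1(X)$, which is K\"ahler for every $\e>0$ because $K_X$ is nef. I would aim to show that the Mabuchi K-energy $\mathcal{M}_{\Omega_\e}$ is proper with respect to the Aubin–Mabuchi $J$-functional (the $d_1$-distance) for small $\e$. Existence of a cscK metric then follows from Chen–Cheng's theorem: properness of the K-energy implies existence. Uniqueness follows from the result of Berman–Darvas–Lu that cscK metrics are unique in their class modulo $\mathrm{Aut}_0(X)$. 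We also need $\mathrm{Aut}_0(X)$ to be trivial. If $X$ admitted a nonzero holomorphic vector field, properness would fail, since the K-energy is invariant along the orbits of $\mathrm{Aut}_0(X)$ while $J$ is unbounded along them. So triviality comes out of the properness argument.

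The key step is the Chen–Tian decomposition
\begin{equation*}
\mathcal{M}(\varphi) = \int_X \log\frac{\omega_\varphi^n}{\omega^n}\,\omega_\varphi^n + \mathcal{J}_{-\Ric\omega}(\varphi),
\end{equation*}
where $\omega\in\Omega_\e$ is a reference metric and the twisted $J$-functional is built from the form $\chi := -\Ric\omega$ (here with Ric taken from the reference metric). The entropy term is bounded below by $\delta J(\varphi) - C$ for some small $\delta>0$; this is Tian's $\alpha$-invariant argument, and it requires uniformity in $\e$. The problem therefore reduces to showing that $\mathcal{J}_\chi$ is bounded below, or even proper, on $\Omega_\e$.

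Choose $\omega = \e\omega_0 + \theta$, where $\theta\in -2\pi c_1(X)$ is close to semi-positive. Then $-\Ric\omega$ lies in the class $-2\pi c_1(X) = \Omega_\e - \e[\omega_0]$. By the results of Song–Weinkove, Collins–Székelyhidi and G. Chen on the J-equation, $\mathcal{J}_\chi$ is proper as soon as $\chi$ can be taken K\"ahler and the numerical criterion
\begin{equation*}
\left(n\,[\chi] - p\,\frac{[\chi]\cdot\Omega^{n-1}}{\Omega^n}\,\Omega\right)\cdot \Omega^{p-1}\cdot V > 0
\end{equation*}
holds for all $p$-dimensional subvarieties $V$. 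When $K_X$ is merely nef, $\chi$ cannot be taken positive. The strategy is therefore to perturb: write $-2\pi c_1(X) = \Omega_\e - \e[\omega_0]$ and use linearity of $\mathcal{J}_\chi$ in $\chi$, which gives $\mathcal{J}_{-\Ric} = \mathcal{J}_{\Omega_\e\text{-part}} - \e\,\mathcal{J}_{\omega_0}$. The first piece is essentially the Aubin functional, which is coercive with a constant of order one. The second piece is bounded by $\e\,C\,J$, using Lipschitz estimates for twisted $J$-functionals. For small $\e$ the second piece is absorbed, together with the entropy gain $\delta J$, and properness follows.

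I expect the main obstacle to be uniformity in $\e$. The $\alpha$-invariant of $\Omega_\e$, the constant in $\mathcal{J}_{\e\omega_0}\le C\e J$, and the coercivity constant of the leading term must all be controlled as the class degenerates toward the nef class $-2\pi c_1(X)$. I would handle this by rescaling $\Omega_\e$ and invoking Dyrefelt's stability estimates for the K-energy under perturbation of the class.
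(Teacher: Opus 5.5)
The paper does not prove this statement; it only cites Dyrefelt and Song. So I am comparing your outline with the standard argument. The architecture is right: Chen--Tian formula, $\alpha$-invariant bound on the entropy, the Song/G.~Chen criterion for the J-equation, then Chen--Cheng. However, the step that is supposed to produce properness fails.

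\textbf{The absorption step fails.} You claim $\e\,\mathcal{J}_{\omega_0}\le \e C J$ with $C$ independent of $\e$. The Lipschitz constant of $\mathcal{J}_{\e\omega_0}$ is governed by $\sup_X|\e\omega_0|_{\omega}$ with $\omega\in\Omega_\e$. This is of order one, not $\e$, because $\omega$ itself has size only $\e$ in the directions where the nef class $-2\pi c_1(X)$ degenerates. In those directions $\e\mathcal{J}_{\omega_0}$ actually cancels the Aubin term to leading order.

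Here is a concrete example. Take $X=C\times E$ with $g(C)\ge 2$ and $E$ elliptic, and set $\omega_0=\eta_C+\omega_E$, where $\Ric\eta_C=-\eta_C$ and $\omega_E$ is flat. Let $\omega=(1+\e)\eta_C+\e\omega_E$. Then $-\Ric\omega=\eta_C$ and $\underline{\eta_C}=2\frac{[\eta_C]\cdot\Omega_\e}{\Omega_\e^2}=\frac{1}{1+\e}$. For $\varphi=\e\psi$ with $\psi\in\mathrm{PSH}(E,\omega_E)$, one checks $2\eta_C\wedge\omega_{t\varphi}=\frac{1}{1+\e}\,\omega_{t\varphi}^2$. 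Hence $\mathcal{J}_{-\Ric\omega}(\varphi)=0$, and so
\begin{equation*}
\e\,\mathcal{J}_{\omega_0}(\varphi)=\mathcal{J}_{\omega}(\varphi)=V\,(I-J)(\varphi)\ge \tfrac{V}{2}\,J(\varphi),
\end{equation*}
while $J(\e\psi)=\e J_E(\psi)$ is unbounded. So the piece you want to absorb is as large as the coercive piece, for every $\e$. Along fibre directions the twisted term has no coercivity at all, and properness must come entirely from the entropy.

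\textbf{The missing idea.} You need to move an order-one fraction of the entropy gain into the twist; this is what makes the twisting class K\"ahler.

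\begin{itemize}
\item Jensen plus Tian give $H(\varphi)\ge\alpha V I(\varphi)-C\ge\frac{n+1}{n}\alpha\,\mathcal{J}_\omega(\varphi)-C$ for $\alpha<\alpha(\Omega_\e)$.
\item $\alpha(\Omega_\e)\ge\alpha_0>0$ uniformly in $\e$, since $\Omega_\e$ has representatives bounded by a fixed multiple of $\omega_0$.
\item By linearity of $\chi\mapsto\mathcal{J}_\chi$,
\begin{equation*}
\mathcal{M}\ge\mathcal{J}_{-\Ric\omega+s\omega}+\Big(\tfrac{n+1}{n}\alpha-s\Big)\mathcal{J}_\omega-C.
\end{equation*}
\item For fixed $0<s<\frac{n+1}{n}\alpha_0$, the class $-2\pi c_1(X)+s\Omega_\e=(1+s)(-2\pi c_1(X))+s\e[\omega_0]$ is K\"ahler.
\end{itemize}

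It then suffices to solve the J-equation for $(\Omega_\e,\chi)$, where $[\chi]=[\theta]+\frac{s\e}{1+s}[\omega_0]$ and $[\theta]=-2\pi c_1(X)$. By Song/G.~Chen this is equivalent to
\begin{equation*}
\int_V c\,\Omega_\e^p-p\,\chi\wedge\Omega_\e^{p-1}>0 \quad\text{for all $p$-dimensional } V\subsetneq X,\qquad c=n\frac{[\chi]\cdot\Omega_\e^{n-1}}{\Omega_\e^n}.
\end{equation*}
Your displayed criterion has this inequality reversed and mis-normalized.

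To verify the criterion, set $m_Y=\e\, p_Y\frac{[\omega_0]\cdot\Omega_\e^{p_Y-1}\cdot Y}{\Omega_\e^{p_Y}\cdot Y}$. Expanding, the left side equals $\Omega_\e^p\cdot V\,\big(n-p-\frac{m_X-m_V}{1+s}\big)$. Nefness gives $m_V\ge p-\nu$ and $m_X\le n-\nu+C\e$, so the quantity is positive once $\e<s/C$. The perturbation parameter must therefore be of the size of the $\alpha$-invariant, with $\e\ll s$, not of size $\e$.

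\textbf{Uniqueness.} Properness does not force $\mathrm{Aut}_0(X)$ to be trivial; $C\times E$ and complex tori are counterexamples. Since $X$ is not uniruled, $\mathrm{Aut}_0(X)$ is a compact torus, so its orbits are bounded. Uniqueness should instead come from Berman--Darvas--Lu, together with the fact that all holomorphic vector fields are then parallel, so $\mathrm{Aut}_0(X)$ acts by isometries of the cscK metric.
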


\noindent Denote the unique cscK metric in $\e [\omega_0] - 2 \pi c_1(X)$ by $\omega_{\e}$. For later use, we note that one can easily adapt the arguments of Liu, \cite{L23}, to prove
\begin{equation}\label{nef_L2_ricci_limit}
    \lim_{\e \to 0} \int_X \e^{-(n-\nu-1)}|\Ric \omega_{\e} + \omega_{\e}|^2 \omega_{\e}^n = 0.
\end{equation}

\subsection{Fano Manifolds}

The K\"ahler Ricci flow can also be used to prove the following Miyaoka-Yau inequality.

\begin{theorem}\label{fano_MY}
Let $X$ be any K-semistable Fano manifold. Then
\begin{equation*}
    (2(n+1) c_2(X) - n c_1(X)^2) \cdot (2\pi c_1(X))^{n-2} \geq 0.
\end{equation*}
\end{theorem}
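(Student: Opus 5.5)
The plan is to run the normalized Kähler-Ricci flow on the Fano manifold,
\begin{equation*}
    \frac{\d \omega}{\d t} = -\Ric \omega(t) + \omega(t), \quad \omega(0) = \omega_0 \in 2\pi c_1(X),
\end{equation*}
which exists for all time and preserves the class $[\omega(t)] = 2\pi c_1(X)$. As in the proof of Theorem \ref{semi_ample_MY}, Chern-Weil theory gives, for every $t$,
\begin{equation*}
    (2(n+1)c_2(X) - n c_1(X)^2)\cdot (2\pi c_1(X))^{n-2} \geq -C(n) \int_X |\Ric \omega(t) - \omega(t)|^2 \omega(t)^n,
\end{equation*}
using $|\Ric\omega - \omega|^2 - |\Ric^\circ \omega|^2 = \frac{1}{n}(R-n)^2 \geq 0$. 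The left-hand side is independent of $t$. It therefore suffices to produce a sequence $t_i \to \infty$ along which $\int_X |\Ric \omega - \omega|^2\omega^n \to 0$.

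To obtain such a sequence, I would use the Ricci potential $u$, defined by $\Ric \omega - \omega = \sqrt{-1}\d\dbar u$ and normalized by $\int_X e^{-u}\omega^n = V$. The key point is how the Mabuchi K-energy $\mathcal{M}$ behaves along the flow. Its derivative is
\begin{equation*}
    \frac{d}{dt}\mathcal{M}(\omega(t)) = -\int_X |\nabla u|^2 \omega^n \leq 0.
\end{equation*}
For a Fano manifold, K-semistability is equivalent to $\mathcal{M}$ being bounded below on $2\pi c_1(X)$; this is the result of Li and of Berman--Boucksom--Jonsson (via the $\delta$-invariant $\delta \geq 1$), and I would invoke it. With this bound, integrating in time gives
\begin{equation*}
    \int_0^\infty \int_X |\nabla u|^2 \omega^n \, dt < \infty.
\end{equation*}

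The next step is to upgrade this to the vanishing of $\int_X |\sqrt{-1}\d\dbar u|^2\omega^n = \int_X (\Delta u)^2 \omega^n$ along a sequence. I see two routes. The first is Perelman's estimates, which give $|u|, |\nabla u|, |R| \leq C$ uniformly along the flow. The second is the evolution $\d_t u = \Delta u + u - a(t)$. Combining these with the Bochner-type identity for the weighted Laplacian (Phong--Song--Sturm--Weinkove) gives the inequality
\begin{equation*}
    \frac{d}{dt}\int_X |\nabla u|^2 e^{-u}\omega^n \leq -2\int_X |\nabla\nabla u|^2 e^{-u}\omega^n + C\int_X|\nabla u|^2 e^{-u}\omega^n \cdot(\text{bounded}),
\end{equation*}
up to terms controlled by the $L^2$ gradient. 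Here I use that $|\nabla \nabla u|$ denotes the $(2,0)$ part and that $\Delta_u$ has first eigenvalue at least $1$. A simpler alternative is to mimic the semi-ample proof directly. Using $(\d_t - \Delta) R = -(R - n) + |\Ric - \omega|^2$ and $\d_t \omega^n = -(R-n)\omega^n$, one gets
\begin{equation*}
    \int_X|\Ric\omega-\omega|^2\omega^n = \frac{d}{dt}\int_X R\omega^n + \int_X (R-n)\,\omega^n + \int_X R(R-n)\omega^n.
\end{equation*}
The middle term vanishes because $\int_X R\,\omega^n = nV$. The last term equals $\int_X (R-n)^2\omega^n = \int_X (\Delta u)^2 \omega^n$. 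This is the same quantity we are trying to control, so this route is circular unless $\|R - n\|_{L^2}$ is shown to be time-integrable. The decisive step is thus to prove
\begin{equation*}
    \int_0^\infty \int_X (R-n)^2 \omega^n\, dt < \infty.
\end{equation*}
I would attempt this by integrating by parts, $\int_X (\Delta u)^2 \omega^n = -\int_X \nabla u \cdot \nabla \Delta u\, \omega^n$, and then bounding $|\nabla R| = |\nabla \Delta u|$ uniformly using Perelman/Sesum--Tian-type estimates. This reduces the question to the time-integrability of $\|\nabla u\|_{L^1}$, which is weaker than the $L^2$ integrability established above. I do not yet see how to close that gap.

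Once a sequence $t_i$ is obtained with $\int_X |\Ric\omega(t_i) - \omega(t_i)|^2\omega(t_i)^n \to 0$, the inequality follows exactly as in Theorem \ref{semi_ample_MY}. The main obstacle is therefore the passage from the $L^2$ gradient integrability supplied by the lower bound on the K-energy to $L^2$ control of the full Hessian $\sqrt{-1}\d\dbar u$ along a sequence of times. If the estimates above fail, I would fall back on Tian--Zhang's result that $\|\nabla u\|_{L^2} \to 0$ together with $R \to n$ in $L^2$ on K-semistable Fano manifolds.
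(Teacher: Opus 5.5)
Your reduction is the same as the paper's. Chern--Weil gives the lower bound $-C(n)\int_X|\Ric\omega-\omega|^2\omega^n$. Your identity $\int_X|\Ric\omega-\omega|^2\omega^n=\int_X(R-n)^2\omega^n$ is exactly Proposition \ref{L2_ricci_scalar_equality}. (In the Fano normalization the evolution is $\parabolic{\omega}R=(R-n)+|\Ric\omega-\omega|^2$, not $-(R-n)$; this is harmless because $\int_X(R-n)\omega^n=0$.) The identity is not circular. It reduces the theorem to showing $\int_X(R-n)^2\omega^n\to0$ along some sequence $t_i\to\infty$.

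That step is the genuine gap: you never establish it, and none of your attempts closes it.
\begin{itemize}
\item Perelman's estimates bound $|u|$, $|\nabla u|$ and $|R|$, but not $|\nabla R|$. So the integration-by-parts route lacks its key input.
\item Your weighted Bochner inequality, as written, keeps only the $(2,0)$-Hessian $\nabla\nabla u$. That term gives no control of $\Ric\omega-\omega=\sqrt{-1}\d\dbar u$.
\item The fallback is only a citation.
\end{itemize}
The paper closes this step with the theorem of Phong--Song--Sturm--Weinkove \cite{PSSW09}. If the Mabuchi K-energy is bounded below (which K-semistability gives, by Li), then $\norm{R-n}_{C^0}\to0$ along the normalized flow. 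Hence $\int_X|\Ric\omega-\omega|^2\omega^n\le\norm{R-n}_{C^0}^2\int_X(2\pi c_1(X))^n\to0$.

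You can also close the gap with ingredients you already have, by keeping the $(1,1)$-Hessian rather than the $(2,0)$ one. Differentiate the unweighted quantity $\int_X|\nabla u|^2\omega^n$, using $\d_t u=\Delta u+u+a(t)$, $\d_t\omega=-\sqrt{-1}\d\dbar u$ and $\d_t\omega^n=-\Delta u\,\omega^n$. The result is $-2\int_X(\Delta u)^2\omega^n+2\int_X|\nabla u|^2\omega^n$, plus two cubic terms. These are bounded pointwise by $|\nabla\oo{\nabla}u|\,|\nabla u|^2$ and $|\Delta u|\,|\nabla u|^2$. Perelman gives $|\nabla u|+|\Delta u|\le C$, and on a compact K\"ahler manifold $\int_X|\nabla\oo{\nabla}u|^2\omega^n=\int_X(\Delta u)^2\omega^n$. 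Cauchy--Schwarz then yields
\begin{equation*}
\frac{d}{dt}\int_X|\nabla u|^2\omega^n\le-\int_X(R-n)^2\omega^n+C\int_X|\nabla u|^2\omega^n .
\end{equation*}
Integrate this in time and use $\int_0^\infty\int_X|\nabla u|^2\omega^n\,dt<\infty$, which follows from the K-energy lower bound. This gives exactly the time-integrability of $\int_X(R-n)^2\omega^n$ you called decisive, and hence the sequence $t_i$. This route avoids the $C^0$ convergence theorem. It only gives subsequential convergence, but that is all the inequality needs.
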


\noindent Let $X$ be Fano and consider the normalized K\"ahler-Ricci flow
\begin{equation*}
    \frac{\d \omega(t)}{\d t} = -\Ric 
    \omega(t) + \omega(t), \quad \omega(0) = \omega_0 \in 2 \pi c_1(X),
\end{equation*}
which exists for all time, \cite{TZ06,BEG13}, and satisfies $[\omega(t)] = 2 \pi c_1(X)$. 

\begin{proposition}\label{L2_ricci_scalar_equality}
We have
 \begin{equation*}
     \int_X |\Ric \omega - \omega|_{\omega}^2 \omega^n = \int_X (R-n)^2 \omega^n. 
 \end{equation*}
\end{proposition}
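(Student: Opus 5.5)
The plan is to express both sides through the Ricci potential and then check that they agree using a purely cohomological (Stokes) argument.

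\textbf{Step 1: the Ricci potential.} Since $[\omega(t)] = 2\pi c_1(X) = [\Ric \omega(t)]$ along the normalized flow, the $\partial\dbar$-lemma gives, at each time, a smooth real function $u$ (the Ricci potential) with
\begin{equation*}
    \Ric \omega - \omega = \sqrt{-1}\,\partial\dbar u .
\end{equation*}
Taking the trace with respect to $\omega$ gives $\Delta_\omega u = \tr_\omega(\Ric\omega - \omega) = R - n$. Hence the claim is equivalent to the identity
\begin{equation*}
    \int_X |\sqrt{-1}\,\partial\dbar u|_\omega^2\, \omega^n = \int_X (\Delta_\omega u)^2\, \omega^n ,
\end{equation*}
and this should hold for any smooth function $u$ on a compact K\"ahler manifold.

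\textbf{Step 2: a pointwise algebraic identity.} For any real $(1,1)$-form $\alpha$ one has, up to the fixed normalization constant in the paper's conventions,
\begin{equation*}
    n(n-1)\, \alpha^2 \wedge \omega^{n-2} = \big( (\tr_\omega \alpha)^2 - |\alpha|^2_\omega \big)\, \omega^n .
\end{equation*}
This is checked at a point in coordinates where $\omega$ is the identity and $\alpha$ is diagonal with eigenvalues $\lambda_i$. Both sides then reduce to $2\sum_{i<j}\lambda_i\lambda_j$. This is the same computation that underlies the Chern--Weil formula used in the proof of Theorem \ref{semi_ample_MY}.

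\textbf{Step 3: Stokes' theorem.} Apply Step 2 with $\alpha = \sqrt{-1}\,\partial\dbar u$. The form
\begin{equation*}
    \sqrt{-1}\,\partial\dbar u \wedge \sqrt{-1}\,\partial\dbar u \wedge \omega^{n-2} = d\big(\sqrt{-1}\,\dbar u \wedge \sqrt{-1}\,\partial\dbar u \wedge \omega^{n-2}\big)
\end{equation*}
is exact, because $d\omega = 0$ and $\partial\dbar u$ is closed. Its integral over the compact manifold $X$ therefore vanishes. Integrating Step 2 then gives $\int_X |\alpha|^2\omega^n = \int_X (\tr_\omega\alpha)^2\omega^n$, which is the claim.

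An equivalent route is to integrate by parts directly:
\begin{equation*}
    \int_X u_{i\oo{j}}u_{j\oo{i}} = -\int_X u_i\, u_{j\oo{i}\oo{j}} = -\int_X u_i\, (\Delta u)_{\oo{i}} = \int_X (\Delta u)^2 .
\end{equation*}
Here the middle step uses the Kähler symmetry $u_{j\oo{i}\oo{j}} = u_{j\oo{j}\oo{i}}$. No curvature term appears because $\oo{i}$ and $\oo{j}$ are both antiholomorphic indices. This is a brief version of the argument rather than a full derivation.

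\textbf{Main obstacle.} The only real point to watch is the constant in Step 2: the norm convention for $(1,1)$-forms must be the one under which Step 2 holds exactly. The Stokes argument itself is routine.
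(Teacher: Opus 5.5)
Your proof is correct, but it takes a different route from the paper's. The paper never introduces a Ricci potential. It works along the normalized K\"ahler--Ricci flow and integrates the evolution equation $\left(\frac{\partial}{\partial t}-\Delta\right)(R+A) = (R-n)+|\Ric\omega-\omega|^2$ against $\omega^n$. It then uses $\partial_t\omega^n = -(R-n)\omega^n$ to pull the time derivative outside the integral. With $A=1-n$ the remaining terms combine into $\int_X (R-n)^2\omega^n$, and $\frac{d}{dt}\int_X (R+A)\omega^n = 0$ because that integral is cohomological.

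You instead use the $\partial\dbar$-exactness of $\Ric\omega-\omega$ to reduce the claim to the static identity $\int_X |\sqrt{-1}\,\partial\dbar u|^2\omega^n = \int_X (\Delta u)^2\omega^n$. You prove it by the $\sigma_2$ identity plus Stokes, or by the Bochner-type integration by parts. Both arguments are sound: $\sqrt{-1}\,\partial\dbar u = d(\sqrt{-1}\,\dbar u)$, and $\nabla_{\oo{i}}$, $\nabla_{\oo{j}}$ commute on a K\"ahler manifold.

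Your route is more elementary and never uses the flow. It shows the identity holds for every K\"ahler metric in $2\pi c_1(X)$. In particular it applies verbatim along the $V$-modified flow, where it gives $\int_X|\Ric^\circ\omega|^2\omega^n = \frac{n-1}{n}\int_X (R-n)^2\omega^n$ directly. That would bypass the $\varepsilon$-absorption step there and give a slightly better constant.

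The paper's flow computation is the same bookkeeping it uses in the semi-ample case. There $[\Ric\omega+\omega] = e^{-t}([\omega_0]-[f^*\eta_{FS}])$ is generally nonzero, so no Ricci potential exists and only a time-integrated bound is available. That uniformity of method is what the paper's approach buys.

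Your ``main obstacle'' is not actually an issue. The paper's convention is $|\omega|_\omega^2 = n$, implicit in $|\Ric\omega+\omega|^2-|\Ric^\circ\omega|^2 = \frac1n(R+n)^2$. With it, $\alpha^2\wedge\omega^{n-2} = \frac{2}{n(n-1)}\sigma_2(\lambda)\,\omega^n$, so your Step 2 holds exactly with no extra constant.
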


\begin{proof}
We have, for some $A \in \R$ to be determined, that the scalar curvature evolves according to, \cite{TZ16},
\begin{equation*}
    \parabolic{\omega} (R + A) = (R - n) + |\Ric \omega - \omega|_{\omega}^2
\end{equation*}
and the volume form by
\begin{equation*}
    \d_t \omega^n = -(R-n)\omega^n.
\end{equation*}
Then it follows that
\begin{align*}
    \int_X |\Ric \omega - \omega|^2_{\omega} \omega^n &= \int_X \d_t(R +A) \omega^n - \int_X (R-n )\omega^n\\
    &= \frac{d}{dt} \int_X (R + A) \omega^n - \int_X (R + A) \d_t\omega^n - \int_X (R- n) \omega^n\\
    &= \frac{d}{dt} \int_X (R+A) \omega^n + \int_X (R-n)^2 \omega^n\\
    &= \int_X (R-n)^2 \omega^n
\end{align*}
by setting $A = 1-n$ since $\int_X (R +A) \omega^n$ is constant. Indeed, since $[\omega(t)] = [\omega_0] = 2 \pi c_1(X)$, we find
\begin{equation*}
    \int_X (R_{\omega(t)} + A) \omega(t)^n = \int_X \Big(n \Ric \omega(t) \wedge \omega(t)^{n-1} + A \omega(t)^n\Big) =  (n + A) \int_X (2 \pi c_1(X))^n.
\end{equation*}
\end{proof}

\noindent Now observe the following theorem. We note that a lower bound on the Mabuchi K-energy on $2 \pi c_1(X)$ is equivalent to K-semistability, \cite{L17}, and does not guarantee the existence of a K\"ahler-Einstein metric.

\begin{theorem}[\cite{PSSW09}]
If the Mabuchi K-energy is bounded from below on $2 \pi c_1(X)$ then the normalized K\"ahler-Ricci flow satisfies
\begin{equation*}
    \norm{R_{\omega(t)} - n}_{C^0} \to 0
\end{equation*}
as $t \to \infty$.
\end{theorem}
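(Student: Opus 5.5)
The statement is the theorem of \cite{PSSW09}, so we may take it as given. For completeness, here is how I would reprove it by combining the monotonicity of the K-energy with Perelman's estimates and parabolic smoothing.

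\textbf{Ricci potential and the K-energy.} Write $u = u(t)$ for the normalized Ricci potential along the normalized K\"ahler-Ricci flow, so that
\begin{equation*}
    \Ric \omega - \omega = i \d \dbar u, \qquad \int_X e^{-u} \omega^n = \int_X (2\pi c_1(X))^n.
\end{equation*}
Tracing gives $\Delta u = R - n$. The first step is the standard computation that the Mabuchi K-energy $\mathcal{M}$ is non-increasing along the flow, with
\begin{equation*}
    \frac{d}{dt}\mathcal{M}(\omega(t)) = -\int_X |\nabla u|^2_{\omega(t)}\, \omega(t)^n .
\end{equation*}
The lower bound on $\mathcal{M}$ then yields $\int_0^\infty \int_X |\nabla u|^2 \omega^n\, dt < \infty$.

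\textbf{Uniform bounds and full-time decay.} Next I would invoke Perelman's estimates for the Fano K\"ahler-Ricci flow \cite{TZ16}: uniform bounds on $\|u\|_{C^0}$, $\|\nabla u\|_{C^0}$, $\|R\|_{C^0}$ and the diameter. Add to these the uniform Sobolev inequality along the flow (Ye, Zhang). Differentiating $\int_X |\nabla u|^2 \omega^n$ in time and using $\d_t u = \Delta u + u + c(t)$, a Bochner formula and the bounds above, I expect
\begin{equation*}
    \frac{d}{dt}\int_X |\nabla u|^2 \omega^n \leq C
\end{equation*}
for a uniform constant $C$. Together with integrability in time, this upgrades decay along a subsequence to $\int_X |\nabla u|^2 \omega^n \to 0$ as $t \to \infty$. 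The same computation also shows that $\int_{t}^{t+1}\int_X \big(|\nabla\nabla u|^2 + |\nabla\oo{\nabla} u|^2\big)\omega^n\, ds \to 0$. Since $|R-n|^2 = |\Delta u|^2 \leq n |\nabla \oo{\nabla} u|^2$, it follows that $\int_t^{t+1}\int_X (R-n)^2 \omega^n\, ds \to 0$.

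\textbf{From integral to pointwise decay.} The last step is to pass from these integral statements to $C^0$ decay. I would run parabolic Moser iteration on the time interval $[t-1,t+1]$, which is legitimate because of the uniform Sobolev constant and uniformly bounded volume. First apply it to $|\nabla u|^2$, which satisfies the subsolution inequality
\begin{equation*}
    \parabolic{\omega} |\nabla u|^2 \leq |\nabla u|^2 - |\nabla\nabla u|^2 - |\nabla\oo{\nabla} u|^2 ;
\end{equation*}
this gives $\sup_X |\nabla u|^2 \to 0$. Then treat $R-n$ with
\begin{equation*}
    \parabolic{\omega}(R-n) = (R-n) + |\Ric\omega - \omega|^2 .
\end{equation*}
The lower bound on $R-n$ comes from the minimum principle applied to the space-time $L^2$ smallness. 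The upper bound is harder, since $|\Ric \omega - \omega|^2$ is a positive source term with only integral control.

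\textbf{Main obstacle.} The main difficulty is this upper bound on $R-n$. My first attempt would be to bound the auxiliary quantity
\begin{equation*}
    Q = -\Delta u + A|\nabla u|^2 \quad \text{for a large constant } A .
\end{equation*}
The point is that the bad term $|\nabla\oo{\nabla} u|^2$ in the evolution of $-\Delta u$ should be absorbed by $A$ times the good term $-|\nabla\oo{\nabla} u|^2$ coming from $A|\nabla u|^2$, making $Q$ a subsolution $\parabolic{\omega} Q \leq Q + C\,(\text{small})$. One could then apply the Moser iteration to $Q_+$. Combined with $\sup_X|\nabla u| \to 0$ and the $L^2$ decay of $R-n$, this would give $\norm{R-n}_{C^0} \to 0$.
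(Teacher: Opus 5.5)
The paper gives no proof of this statement. It quotes it from \cite{PSSW09} and uses it as a black box, so your first sentence is exactly the paper's treatment.

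Your supplementary reproof is sound up to its last step. Integrating your formula for $(\d_t-\Delta)|\nabla u|^2$ against $\d_t\omega^n=-\Delta u\,\omega^n$ gives
\begin{equation*}
\frac{d}{dt}\int_X|\nabla u|^2\omega^n=\int_X|\nabla u|^2\omega^n-\int_X\big(|\nabla\nabla u|^2+|\nabla\oo{\nabla}u|^2\big)\omega^n-\int_X|\nabla u|^2\Delta u\,\omega^n .
\end{equation*}
With Perelman's bound on $\Delta u=R-n$ this yields both $\dot Y\le CY$ and the space-time Hessian decay. The Moser step for $|\nabla u|^2$ is also fine.

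\textbf{The gap.} In the step you call the main obstacle, the sign of $\Delta u$ in $Q$ is reversed. With your conventions $\Ric\omega-\omega=\nabla\oo{\nabla}u$ and $\Delta u=R-n$, the equation you quote reads $(\d_t-\Delta)\Delta u=\Delta u+|\nabla\oo{\nabla}u|^2$. So in the evolution of $-\Delta u$ the Hessian term carries a minus sign and is harmless. In fact $-\Delta u=n-R$ is already a subsolution. An upper bound on $Q=-\Delta u+A|\nabla u|^2$ therefore only gives $n-R\le o(1)$, which is the lower bound on $R$ again. The upper bound on $R-n$ is never reached as written.

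\textbf{The fix.} Take instead $Q=\Delta u+A|\nabla u|^2=(R-n)+A|\nabla u|^2$ with $A\ge 1$. Then
\begin{equation*}
(\d_t-\Delta)Q=Q-(A-1)|\nabla\oo{\nabla}u|^2-A|\nabla\nabla u|^2\le Q ,
\end{equation*}
with no error term at all. Since $Q_+^2\le 2(\Delta u)^2+2A^2|\nabla u|^4$ and $|\nabla u|^2\le C$, your space-time decay gives $\|Q_+\|_{L^2(X\times[t-1,t+1])}\to 0$. Moser iteration then yields $R-n\le\sup Q_+\to 0$.

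\textbf{The lower bound.} This also does not follow from a minimum principle. From $(\d_t-\Delta)(R-n)\ge R-n$ you only get $\min_X(R-n)(t)\ge e^{t-t_0}\min_X(R-n)(t_0)$, which does not force a negative minimum to decay. What works is the same Moser iteration applied to the nonnegative subsolution $(n-R)_+$; this is where your $-\Delta u$ quantity actually belongs.

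With these two corrections the sketch is a complete argument. As far as I know, the cited reference instead converts $Y(t)\to0$ into $\norm{u}_{C^0}\to 0$, using Perelman's gradient bound and non-collapsing, and then applies a maximum-principle smoothing lemma. Your route replaces that lemma with parabolic Moser iteration, at the cost of depending on the Ye--Zhang uniform Sobolev inequality along the flow.
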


\noindent In general, it is known due to Perelman that the scalar curvature is uniformly bounded along the flow, \cite{SesT08}. Now on to the Miyaoka-Yau inequality. 

\begin{proof}
It follows from $ |\Ric\omega - \omega|^2_{\omega} - |\Ric^\circ \omega|^2_{\omega} = \frac{1}{n}(R - n)^2 \geq 0$ and Proposition \ref{L2_ricci_scalar_equality} that
\begin{align*}
    \big(2(n+1)c_2(X) &- n c_1(X)^2\big) \cdot [\omega(t)]^{n-2}\\
    &= \frac{1}{4 \pi^2 n (n-1)}\int_X (n+1)|\Rm^\circ \omega(t)|^2_{\omega(t)} - (n+2)|\Ric^\circ \omega(t)|^2_{\omega(t)} \omega(t)^n\\
    &\geq -C(n) \int_X |\Ric\omega - \omega|^2_{\omega} \omega^n\\
    &= -C(n) \int_X (R-n)^2 \omega^n.
\end{align*}
\noindent The desired inequality is immediate, i.e.
\begin{align*}
    (2(n+1)c_2(X) - n c_1(X)^2) \cdot (2 \pi c_1(X))^{n-2} \geq -C(n) \norm{R - n}_{C^0}^2 \int_X (2 \pi c_1(X))^n  \to 0
\end{align*}
as $t \to \infty$. 
\end{proof}

\noindent Moreover, we can extend this result to the following setting. See \cite{TZ02,PSSW11} for the definition of the $V$-modified Mabuchi K-energy.

\begin{theorem}\label{modified_fano_MY}
Let $X$ be any Fano manifold, where $V$ is a non-trivial holomorphic vector field whose imaginary
part, $\Imag V$, induces an $S^1$-action on $X$. If the $V$-modified Mabuchi K-energy is bounded below on $2 \pi c_1(X)$ then, for some constant $C_V > 0$ defined by (\ref{divergence_norm}), we have
\begin{equation*}
    (2(n+1) c_2(X) - n c_1(X)^2) \cdot  c_1(X)^{n-2} \geq -\frac{C_V(n+2)}{n(n-1)} c_1(X)^n.
\end{equation*}
\end{theorem}

\noindent The proof requires the $V$-modified K\"ahler-Ricci flow. We say $(X,\omega(t))$ satisfies the $V$-modified K\"ahler-Ricci flow if
\begin{equation*}
    \frac{\d\omega}{\d t} = -\Ric \omega(t) + \omega(t) + L_{V} \omega(t), \quad \omega(0) = \omega_0 \in 2 \pi c_1(X).
\end{equation*}
Again, the flow exists for all time and satisfies $[\omega(t)] \in 2 \pi c_1(X)$, \cite{PSSW11}.\\

\begin{theorem}[\cite{PSSW11}]\label{scalar_and_div_bounds}
There exists a $C > 0$ such that the $V$-modified K\"ahler-Ricci flow satisfies
\begin{equation*}
    \norm{R_{\omega(t)}}_{C^0} + \norm{\div_{\omega(t)} V}_{C^0} \leq C.
\end{equation*}
If the $V$-modified Mabuchi K-energy is bounded from below on $2\pi c_1(X)$ then the $V$-modified K\"ahler-Ricci flow satisfies
\begin{equation*}
    \norm{R_{\omega(t)} - n - \div_{\omega(t)} V}_{C^0} \to 0
\end{equation*}
as $t \to \infty$.
\end{theorem}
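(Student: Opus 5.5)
The plan is to reduce the $V$-modified flow to the ordinary normalized K\"ahler-Ricci flow by a time-dependent family of automorphisms, and then use Perelman-type estimates together with a Zhu-type bound on the holomorphy potential of $V$.

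\textbf{Reduction.} Since $\Imag V$ generates an $S^1$-action, we may first average $\omega_0$ over the action. Then $\omega_0$ is $S^1$-invariant, and by uniqueness the flow stays $S^1$-invariant for all time. Let $\Phi_t$ be the one-parameter group generated by $\mathrm{Re}\,V$; these maps are holomorphic automorphisms. If $\tilde\omega(t)$ solves the normalized K\"ahler-Ricci flow from $\omega_0$, then $\omega(t) = \Phi_t^*\tilde\omega(t)$ solves the $V$-modified flow, because $\frac{d}{dt}\Phi_t^*\tilde\omega = \Phi_t^*(\d_t \tilde\omega) + L_V \Phi_t^*\tilde\omega$. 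Scalar curvature is invariant under pullback, so $\norm{R_{\omega(t)}}_{C^0} = \norm{R_{\tilde\omega(t)}}_{C^0} \leq C$ by Perelman's estimate \cite{SesT08}.

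\textbf{Bound on $\div_\omega V$.} Write $i_V\omega = \sqrt{-1}\,\dbar\theta_V$, with $\theta_V$ real-valued by $S^1$-invariance and normalized so that $\int_X e^{h_\omega}\theta_V\,\omega^n = 0$, where $h_\omega$ is the Ricci potential. Then $L_V\omega = \sqrt{-1}\d\dbar\theta_V$, so $\div_\omega V = \Delta_\omega\theta_V$. Holomorphy of $V$ gives the identity $\Delta_\omega\theta_V + \theta_V + V(h_\omega) = 0$, hence
\begin{equation*}
|\div_\omega V| \leq |\theta_V| + |\nabla\theta_V|_\omega\,|\nabla h_\omega|_\omega .
\end{equation*}
Perelman's estimate bounds $|\nabla h_\omega|$ uniformly. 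Zhu's uniform $C^0$ bound on $\theta_V$ over $S^1$-invariant metrics in $2\pi c_1(X)$ (from a Moser iteration) bounds $|\theta_V|$. The remaining term is $|\nabla\theta_V|^2 = |V|^2_\omega$; a Bochner-type maximum principle argument applied to $|\nabla\theta_V|^2 + \theta_V^2$, using the same identity, bounds it by the $C^0$ norm of $\theta_V$ and of $\nabla h_\omega$. Combining these gives the first claim.

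\textbf{Convergence.} Observe that
\begin{equation*}
R - n - \div_\omega V = -\Delta_\omega(h_\omega - \theta_V),
\end{equation*}
so the task is to show that the modified Ricci potential $u = h_\omega - \theta_V$ is asymptotically harmonic in $C^0$. Along the modified flow, the $V$-modified Mabuchi K-energy decreases with derivative $-\int_X |\nabla u|^2 e^{\theta_V}\omega^n$. Boundedness from below therefore gives
\begin{equation*}
\int_0^\infty \int_X |\nabla u|^2 e^{\theta_V}\omega^n\, dt < \infty .
\end{equation*}
Following \cite{PSSW09}, I would show this quantity is almost monotone in $t$ using the evolution equation of $u$. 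That yields $\norm{\nabla u}_{L^2}\to 0$ without passing to a subsequence. I would then upgrade to $\norm{\Delta u}_{C^0} \to 0$ using the uniform Perelman-type bounds on $u$, $\nabla u$, $\Delta u$ from the first step, local parabolic smoothing, and the uniform Sobolev/diameter control along the flow, together with interpolation between $L^2$ and higher-order bounds.

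The main obstacle is this last upgrade from integral to $C^0$ convergence. In particular, smoothing estimates for $\Delta u$ require control of $\nabla\Delta u$, which must be obtained from the evolution of $|\nabla u|^2$ and $\Delta u$ with the extra $V$-terms absorbed using the bounds on $\theta_V$. I would follow the structure of \cite{PSSW09}, with $e^{\theta_V}\omega^n$ as the reference measure throughout.
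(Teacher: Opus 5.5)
The paper does not prove this statement; it quotes it from \cite{PSSW11}. Your overall plan follows the same lines as that source: conjugate by the automorphisms generated by $\mathrm{Re}\,V$ to import Perelman's bounds, use Zhu's $C^0$ bound on $\theta_V$ and the identity $\Delta\theta_V+\theta_V+V(h)=0$, then use monotonicity of the modified K-energy and a \cite{PSSW09}-type smoothing argument. One step, however, fails as you describe it. You bound $|\nabla\theta_V|^2=|V|^2_\omega$ at a fixed time by an elliptic Bochner/maximum-principle argument, in terms of $\norm{\theta_V}_{C^0}$ and $\norm{\nabla h}_{C^0}$ only. For holomorphic $V$ one has $\Delta|V|^2=|\nabla V|^2-\Ric(V,\oo{V})$. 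So at a maximum of $|V|^2+A\theta_V^2$ you are left with the term $\Ric(V,\oo{V})=|V|^2+h_{i\oo{j}}V^i\oo{V^j}$. Passing to the drift Laplacian $f\mapsto\Delta f+\mathrm{Re}(g^{i\oo{j}}h_i f_{\oo{j}})$, for which $\theta_V$ is an eigenfunction, only trades this for $-\mathrm{Re}\,h_{ij}V^iV^j$, the $(2,0)$-Hessian of $h$. Neither term is controlled. Along the Fano flow, Perelman's estimates bound $R$, $h$, $\nabla h$ and $\Delta h$, but not $\Ric$ or $\nabla^2 h$ pointwise.

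What is missing is the heat equation satisfied by $\theta_V$ along the flow, which makes the Ricci terms cancel. Work along the unmodified flow $\tilde\omega(t)$; the bounds transfer to $\omega(t)=\Phi_t^*\tilde\omega(t)$ since $\Phi_{t*}V=V$. There $\d_t\tilde\omega=-\sqrt{-1}\d\dbar h$ and $\iota_V(\sqrt{-1}\d\dbar h)=\sqrt{-1}\dbar(V(h))$, so $\d_t\theta_V=-V(h)+c(t)=\Delta\theta_V+\theta_V+c(t)$. Here $c(t)$ comes from your normalization and is bounded using Perelman's estimates together with $\int_X|V|^2e^h\omega^n=\int_X\theta_V^2e^h\omega^n$. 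Since $\nabla\nabla\theta_V=0$, and the term $\Ric(\nabla\theta_V,\oo{\nabla}\theta_V)$ produced by $\d_t g^{i\oo{j}}$ cancels the one in the Bochner formula, you get
\begin{equation*}
(\d_t-\Delta)|\nabla\theta_V|^2=|\nabla\theta_V|^2-|\nabla\oo{\nabla}\theta_V|^2,\qquad (\d_t-\Delta)\theta_V^2=2\theta_V(\theta_V+c)-2|\nabla\theta_V|^2 .
\end{equation*}
Hence $Q=|\nabla\theta_V|^2+\theta_V^2$ satisfies $(\d_t-\Delta)Q\le -Q+C$ by Zhu's bound, and the maximum principle bounds $Q$. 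Your inequality $|\div_\omega V|\le|\theta_V|+|V||\nabla h|$ then gives the first claim.

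Two smaller corrections. First, you cannot average $\omega_0$ without changing the flow. $\Imag V$-invariance of $\omega_0$ has to be a standing hypothesis, as in \cite{PSSW11}; it is what makes $L_V\omega$ and $\theta_V$ real and Zhu's bound applicable. Second, $\Delta\theta_V+\theta_V+V(h)=0$ holds with the convention $\Ric\omega-\omega=\sqrt{-1}\d\dbar h$, and with that convention $R-n-\div_\omega V=+\Delta(h-\theta_V)$, not $-\Delta(h-\theta_V)$.

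The $L^2$-to-$C^0$ upgrade in your last step is only indicated. That is the part you are delegating to \cite{PSSW09,PSSW11}.
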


\noindent In particular, this means that 
\begin{equation}\label{divergence_norm}
    C_V := \limsup_{t \to \infty} \norm{\div_{\omega(t)} V}_{C^0}^2
\end{equation}
is some finite, non-negative constant. We denote $\theta_{V}(t)$ to be the smooth function such that $\iota_V \omega(t) = \dbar \theta_V(t)$, and normalize by
\begin{equation*}
    \int_X (e^{\theta_V(t)} -1)\omega(t)^n =0.
\end{equation*}
It is not difficult to check
\begin{equation*}
    \div_{\omega(t)} V = \Delta_{\omega(t)} \theta_V.
\end{equation*}

\begin{proposition}
The scalar curvature evolves according to
\begin{equation*}
    \parabolic{} R = (R-n) + |\Ric \omega - \omega|^2 - \ip{i\d\dbar \theta_V, \Ric \omega} - \Delta \Delta \theta_V,
\end{equation*}
and the volume form evolves according to
\begin{equation*}
    \d_t \omega^n = -(R - n - \Delta \theta_V ) \omega^n.
\end{equation*}
\end{proposition}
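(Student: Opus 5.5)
The plan is to derive both formulas from the first variation formulas for the Kähler--Ricci flow. Along the $V$-modified flow we have $\d_t \omega = -\Ric\omega + \omega + L_V\omega$. Since $V$ is holomorphic and $\iota_V\omega = \dbar\theta_V$, Cartan's formula gives
\begin{equation*}
L_V\omega = d\iota_V\omega = \d\dbar\theta_V,
\end{equation*}
which we write as $i\d\dbar\theta_V$ under the paper's convention, absorbing the factor of $i$ into the normalization of $\theta_V$. So the flow is $\d_t g_{i\oo{j}} = -R_{i\oo{j}} + g_{i\oo{j}} + \d_i\d_{\oo{j}}\theta_V$, i.e.\ $\d_t g = -h$ with
\begin{equation*}
h_{i\oo{j}} = R_{i\oo{j}} - g_{i\oo{j}} - \d_i\d_{\oo{j}}\theta_V.
\end{equation*}

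\textbf{Volume form.} The volume form is immediate from $\d_t\omega^n = -(\tr_\omega h)\,\omega^n$ and $\tr_\omega h = R - n - \Delta\theta_V$.

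\textbf{Scalar curvature.} For any Kähler variation $\d_t g_{i\oo{j}} = -h_{i\oo{j}}$ of Kähler metrics, the standard formula is
\begin{equation*}
\d_t R = \Delta(\tr_\omega h) + \ip{h,\Ric\omega}.
\end{equation*}
This follows from $R = g^{i\oo{j}}R_{i\oo{j}}$, $\d_t R_{i\oo{j}} = -\d_i\d_{\oo{j}}\d_t\log\det g$, and $\d_t g^{i\oo{j}} = g^{i\oo{l}}h_{k\oo{l}}g^{k\oo{j}}$. We then substitute the $h$ above term by term.
\begin{itemize}
\item The Laplacian term is $\Delta(R - n - \Delta\theta_V) = \Delta R - \Delta\Delta\theta_V$. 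Moving $\Delta R$ to the left gives $\parabolic{}R$ and leaves $-\Delta\Delta\theta_V$.
\item The inner product term is $\ip{h,\Ric} = |\Ric|^2 - R - \ip{i\d\dbar\theta_V,\Ric\omega}$.
\end{itemize}
Finally, we rewrite $|\Ric|^2 - R = (R-n) + |\Ric - \omega|^2$, using $|\Ric-\omega|^2 = |\Ric|^2 - 2R + n$. Putting these together gives the stated evolution equation.

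\textbf{Main point of care.} The only real issue is the conventions: the sign and factor of $i$ in $L_V\omega = i\d\dbar\theta_V$, and that $\Delta$ denotes the complex Laplacian $g^{i\oo{j}}\d_i\d_{\oo{j}}$. These must match $\div_\omega V = \Delta\theta_V$ stated above. I would check them against the unmodified case $V=0$, which must recover the formulas quoted from \cite{TZ16} and used in Proposition \ref{L2_ricci_scalar_equality}. There is a subtlety in that the Lie derivative term is a real form only when $\Imag V$ is Killing. However, the computation is pointwise and formal, so it holds with $\theta_V$ complex-valued and the inner products taken in the Hermitian sense; the structure of the statement is unaffected.
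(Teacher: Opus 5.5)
Your proposal is correct and is essentially the paper's argument. The paper likewise differentiates $R=\tr_\omega \Ric\omega$: the variation of the inverse metric produces $|\Ric\omega|^2 - R - \ip{i\d\dbar\theta_V,\Ric\omega}$, and $\d_t \Ric\omega = i\d\dbar(R-n-\Delta\theta_V)$ produces $\Delta R - \Delta\Delta\theta_V$, with the volume form obtained by direct computation. Your general formula $\d_t R = \Delta(\tr_\omega h) + \ip{h,\Ric\omega}$ simply packages these same two identities before substituting $h$.
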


\begin{proof}
One can check
\begin{align*}
    \d_t \tr_{\omega} \alpha &= \ip{\Ric \omega, \alpha} - \tr_{\omega} \alpha - \ip{i\d\dbar \theta_V, \alpha} + \tr_{\omega} (\d_t \alpha)\\
    \d_t \Ric \omega &= i\d\dbar (R - n - \Delta \theta_V)
\end{align*}
and the evolution of the scalar curvature will follow. The evolution of the volume form follows by direction computation.
\end{proof}

\noindent Now let's prove the Miyaoka-Yau inequality.

\begin{proof}
Firstly,
\begin{equation*}
    (2(n+1) c_2(X) - n c_1(X)^2) \cdot [\omega(t)]^{n-2} \geq - \frac{n+2}{4 \pi^2 n(n-1)} \int_X |\Ric^\circ \omega|^2 \omega^n.
\end{equation*}
Then, by the evolution of the scalar curvature and the volume form,
\begin{align*}
    \int_X |\Ric^\circ \omega|^2 \omega^n &= \int_X \left(- \frac{1}{n}(R-n)^2 + |\Ric \omega - \omega|^2 \right)\omega^n \\
    &= \int_X \left(- \frac{1}{n}(R-n)^2 + \d_t R - (R-n) + \ip{i\d\dbar \theta_V, \Ric \omega} \right)\omega^n \\
    &= \frac{d}{dt} \left(\int_X R \omega^n\right) + \int_{X} \ip{i\d\dbar \theta_V, \Ric^\circ \omega} \omega^n\\
    &+\int_X \left(\frac{-R}{n}\right)(R - n - \Delta \theta_V) \omega^n
\end{align*}
Note that
\begin{equation*}
    \frac{d}{dt} \left(\int_X R \omega^n\right) = \frac{d}{dt} \int_X n (2\pi c_1(X))^n = 0
\end{equation*}
and, moreover, for some $\e > 0$ to be determined,
\begin{align*}
    \int_X \ip{i\d\dbar \theta_V, \Ric^{\circ} \omega} \omega^n &\leq \int_X \left(\frac{1}{2 \e} |\nabla \oo{\nabla} \theta_V|^2 + \frac{\e}{2} |\Ric^{\circ} \omega |^2 \right)\omega^n\\
    &= \int_X \frac{1}{2 \e} (\Delta \theta_V)^2 + \frac{\e}{2} |\Ric^{\circ} \omega|^2 \omega^n,
\end{align*}
so that
\begin{align*}
    \int_X |\Ric^\circ \omega|^2 \omega^n &\leq \frac{2}{2-\e}\int_X \left(-\frac{R}{n}\right) (R - n - \Delta \theta_V) \omega^n \\
    &+ \frac{1}{2\e - \e^2} \int_X (\Delta \theta_V)^2 \omega^n.
\end{align*}
Choose $\e = 1$ such that $\frac{1}{2\e - \e^2}$ is minimized. Thus, by applying Theorem \ref{scalar_and_div_bounds}, we arrive at
\begin{align*}
    (2(n+1) c_2(X) - n c_1(X)^2) \cdot c_1(X)^{n-2} &\geq -\frac{n+2}{(2\pi)^{n}n(n-1)} \limsup_{t \to \infty} \int_X |\Ric^\circ \omega|^2 \omega^n\\
    &\geq -\frac{n+2}{(2\pi)^n n(n-1)} \limsup_{t \to \infty}\norm{\Delta \theta_V}^2_{C^0} \int_X (2 \pi c_1(X))^n\\
    &- C \lim_{t \to \infty} \norm{R - n - \Delta \theta_V}_{C^0} \int_X (2 \pi c_1(X))^n
\end{align*}
which gives the desired result.
\end{proof}

\noindent One could also consider using the twisted K\"ahler-Ricci flow, \cite{L13}, to derive some Miyaoka-Yau type inequality.

\section{Slope Semi-Stability}

\noindent We recall the notion of slope $[\omega_0]$-(semi)stability, \cite{UY86,K14}. The slope of a holomorphic vector bundle, $E$, with respect to a K\"ahler class, $[\omega_0]$, is the quantity
\begin{equation*}
    \mu(E) = \frac{\int_X 2\pi c_1(E) \wedge [\omega_0]^{n-1}}{ \rank (E)} = \frac{\deg(E)}{\rank(E)}.
\end{equation*}
A holomorphic vector bundle $E$ is slope $[\omega_0]$-(semi)stable if for every coherent, proper $\OO_X$-subsheaf $\mathcal{F} \subset E$, i.e. $0 < \rank(\mathcal{F}) < \rank(E)$, 
\begin{equation*}
    \mu(\mathcal{F}) < \mu(E) \quad (\mu(\mathcal{F}) \leq \mu(E)),
\end{equation*}
respectively. We say a holomorphic vector bundle is slope $L$-(semi)stable, for some holomorphic line bundle $L$, if it is $2\pi c_1(L)$-(semi)stable.

\begin{remark}
The first Chern class maybe be extended to such coherent sheaves via $c_1(\mathcal{F}) := c_1((\det \mathcal{F})^{**})$, since $(\det \mathcal{F})^{**}$ is a holomorphic line bundle.
\end{remark}

\noindent We add the following refinement of slope and stability for nef classes.

\begin{definition}\label{normalized_slope}
Let $E$ be a holomorphic vector bundle, $[\eta]$ a nef class, and $[\omega_0]$ a K\"ahler class. We define the $([\omega_0], [\eta])$-slope of $E$ to be
\begin{equation*}
    \mu_{[\omega_0],[\eta]}(E) = \frac{\int_X 2 \pi c_1(E) \wedge \omega_0^{n-i-1} \wedge \eta^{i}}{\rank(E) \int_X \omega_0^{n-i} \wedge \eta^{i}}
\end{equation*}
where $\nu = \nu([\eta])$ and $i = \min\{\nu, n-1\}$. We say $E$ is $([\omega_0], [\eta])$-(semi)stable if
\begin{equation*}
    \mu_{[\omega_0],[\eta]}(\mathcal{F}) < \mu_{[\omega_0],[\eta]}(E), \quad (\mu_{[\omega_0],[\eta]}(\mathcal{F}) \leq \mu_{[\omega_0],[\eta]}(E))
\end{equation*}
for every coherent, proper $\OO_X$-subsheaf $\mathcal{F}\subset E$, respectively.
\end{definition}

\noindent For convenience, we simply write $([\omega_0],L)$-slope or semistable instead of $([\omega_0], 2 \pi c_1(L))$-slope or semistable, etc. Moreover, note that if $[\alpha]$ is a K\"ahler class then 
\begin{equation*}
    \mu_{[\omega_0], [\alpha]}(E) = \frac{\int_X 2 \pi c_1(E) \wedge [\alpha]^{n-1}}{\rank(E) \int_X [\omega_0] \wedge [\alpha]^{n-1}}
\end{equation*}
so that $E$ is $([\omega_0],[\alpha])$-(semi)stable iff it is $[\alpha]$-(semi)stable in the classical sense. We remark that, for the classic slope, one may have $\mu(E) = 0$ if $[\omega_0]$ is replaced with an arbitrary nef class, and so the notions of slope and stability become trivial. This normalized slope, $\mu_{[\omega_0],[\eta]}(E)$, avoids this issue and has the following nice property.

\begin{proposition}
Suppose $K_X$ is nef, so that $[\omega_{\e}] = \e[\omega_0] - 2 \pi c_1(X)$ is K\"ahler for all $0 < \e < \e_0$ for some $\e_0 > 0$. Then
\begin{equation*}
    \mu_{[\omega_0], K_X}(E) = \lim_{\e \to 0} \mu_{[\omega_0], [\omega_\e]}(E).
\end{equation*}
\end{proposition}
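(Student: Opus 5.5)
The plan is to expand both numerator and denominator of $\mu_{[\omega_0],[\omega_\e]}(E)$ as polynomials in $\e$ and compare leading-order terms. Write $[\eta] = -2\pi c_1(X)$, which is nef, and let $\nu = \nu([\eta])$ and $i = \min\{\nu, n-1\}$. Since $[\omega_\e] = \e[\omega_0] + [\eta]$ is Kähler, the remark after Definition \ref{normalized_slope} gives
\begin{equation*}
    \mu_{[\omega_0],[\omega_\e]}(E) = \frac{\int_X 2\pi c_1(E) \wedge (\e\omega_0 + \eta)^{n-1}}{\rank(E)\int_X \omega_0 \wedge (\e\omega_0+\eta)^{n-1}}.
\end{equation*}
Both integrals are polynomials in $\e$, obtained by the binomial expansion exactly as in the proof of Theorem \ref{semi_ample_MY}.

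\textbf{Denominator.} Expanding gives
\begin{equation*}
    \sum_{j=0}^{n-1}\binom{n-1}{j}\e^{n-1-j}\,[\omega_0]^{n-j}\cdot[\eta]^j.
\end{equation*}
By the definition of the numerical dimension, $[\omega_0]^{n-j}\cdot[\eta]^j = 0$ for $j > \nu$, and $[\omega_0]^{n-\nu}\cdot[\eta]^{\nu} > 0$. Here one uses that for a nef class these intersections are nonnegative, and the vanishing for $j > \nu$ holds by the standard fact (monotonicity of positivity for nef classes, e.g.\ via Kähler approximation $[\eta]+\delta[\omega_0]$). If $\nu \le n-1$, then $i = \nu$ and the lowest power of $\e$ with a nonzero coefficient is $\e^{n-1-i}$, whose coefficient is $\binom{n-1}{i}[\omega_0]^{n-i}\cdot[\eta]^i > 0$. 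If $\nu = n$, then $i = n-1$, the leading term is the $\e^0$ term $[\omega_0]\cdot[\eta]^{n-1}$, and this is positive since $\nu = n$ forces positivity for all $j \le n$.

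\textbf{Numerator.} Expanding gives
\begin{equation*}
    \sum_j \binom{n-1}{j}\e^{n-1-j}\, 2\pi c_1(E)\cdot[\omega_0]^{n-1-j}\cdot[\eta]^j.
\end{equation*}
The key claim is that the terms with $j > i$ vanish. This is the main obstacle, because $c_1(E)$ is not a positive class. I would write $2\pi c_1(E) = A[\omega_0] - ([A\omega_0] - 2\pi c_1(E))$ with $A$ large enough that $[A\omega_0] - 2\pi c_1(E)$ is Kähler. Each piece then gives an intersection of the form $[\text{Kähler}]\cdot[\omega_0]^{n-1-j}\cdot[\eta]^j$. Since $[\omega_0]$ and the other Kähler class dominate each other up to constants, and products of nef classes respect these inequalities, such an intersection is bounded by a constant times $[\omega_0]^{n-j}\cdot[\eta]^j$, which is $0$ for $j > \nu$. (If $\nu = n$ there is nothing to prove.)

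\textbf{Conclusion.} Dividing numerator and denominator by $\e^{n-1-i}$ and letting $\e \to 0$, the only surviving terms are those with $j = i$. The binomial factors $\binom{n-1}{i}$ cancel, giving
\begin{equation*}
    \lim_{\e\to0}\mu_{[\omega_0],[\omega_\e]}(E) = \frac{\int_X 2\pi c_1(E)\wedge\omega_0^{n-i-1}\wedge\eta^i}{\rank(E)\int_X\omega_0^{n-i}\wedge\eta^i} = \mu_{[\omega_0],K_X}(E).
\end{equation*}
The same computation applies verbatim to coherent subsheaves $\mathcal{F}$, using $c_1(\mathcal{F}) = c_1((\det\mathcal{F})^{**})$.
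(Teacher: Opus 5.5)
Your proposal is correct and follows the same route as the paper: binomially expand $[\omega_\e]^{n-1}$ in $\e$, discard the terms with $j>\nu$, and compare the $\e^{n-\nu-1}$ coefficients of numerator and denominator, with $\nu=n$ handled by continuity. Your domination argument for the numerator, writing $2\pi c_1(E)$ as $A[\omega_0]$ minus a K\"ahler class, supplies a justification the paper leaves implicit: when it asserts $\lim_{\e\to 0}\e^{-(n-\nu-1)}[\omega_\e]^{n-1} = \binom{n-1}{\nu}[\omega_0]^{n-\nu-1}\cdot(-2\pi c_1(X))^{\nu}$ at the level of classes, it is using without comment that the terms with $j>\nu$ vanish against the non-positive class $c_1(E)$.
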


\begin{proof}
By the binomial theorem, if $\nu \leq n-1$,
\begin{equation*}
    \lim_{\e \to 0} \e^{-(n-\nu-1)} [\omega_{\e}]^{n-1} = {n-1 \choose \nu} [\omega_0]^{n -\nu - 1} \cdot (-2 \pi c_1(X))^\nu
\end{equation*}
so that
\begin{equation*}
    \lim_{\e \to 0} \frac{\int_X 2 \pi c_1(E) \wedge [\omega_{\e}]^{n-1}}{\rank(E) \int_X [\omega_0] \wedge [\omega_\e]^{n-1}} = \frac{\int_X 2 \pi c_1(E) \wedge [\omega_0]^{n-\nu-1} \wedge (-2 \pi c_1(X))^\nu}{\rank(E)\int_X [\omega_0]^{n-\nu} \wedge (-2 \pi c_1(X))^{\nu}} 
\end{equation*}
as desired. It is trivial for $\nu = n$.
\end{proof}

\noindent In regards to proving the normalized slope semi-stability of the holomorphic tangent bundle in certain cases, we have the following useful lemma (c.f. \cite[Prop. 5.8.2.]{K14}).

\begin{lemma}
Let $X$ be any compact K\"ahler manifold. For any coherent, proper subsheaf $\mathcal{F}$ of $T^{1,0} X$, and K\"ahler classes $[\omega_0]$, $[\omega_1]$,
\begin{equation*}
    \mu_{[\omega_0], [\omega_1]}(\mathcal{F}) - \mu_{[\omega_0], [\omega_1]}(T^{1,0} X) \leq  \frac{\int_X |\Ric \omega_1 - \lambda_{\omega_1} \omega_1|_{\omega_1} \omega_1^n}{n\int_X \omega_0 \wedge \omega_1^{n-1}}
\end{equation*}
where $\lambda_{\omega_1} = \frac{\deg_{\omega_1} (T^{1,0} X)}{\Vol_{\omega_1}(X)}$.
\end{lemma}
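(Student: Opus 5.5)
Since $[\omega_1]$ is Kähler, $\nu([\omega_1]) = n$ and $i = n-1$. The normalized slope is therefore
\begin{equation*}
\mu_{[\omega_0],[\omega_1]}(\mathcal{F}) = \frac{\int_X 2\pi c_1(\mathcal{F}) \wedge \omega_1^{n-1}}{\rank(\mathcal{F}) \int_X \omega_0 \wedge \omega_1^{n-1}},
\end{equation*}
and it suffices to show
\begin{equation*}
\frac{\deg_{\omega_1}\mathcal{F}}{\rank\mathcal{F}} - \frac{\deg_{\omega_1}T^{1,0}X}{n} \leq \frac{1}{n}\int_X |\Ric\omega_1 - \lambda_{\omega_1}\omega_1|_{\omega_1}\,\omega_1^n,
\end{equation*}
with volume normalizations matched to the paper's conventions. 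I will follow the standard Kobayashi argument from \cite[Prop. 5.8.2]{K14}. Endow $T^{1,0}X$ with the Hermitian metric $g_1$ induced by $\omega_1$. The mean curvature $K = \tr_{\omega_1} F$ is then the Ricci endomorphism, i.e. $\Ric\omega_1$ with an index raised. Accordingly, $K - \lambda_{\omega_1}\mathrm{Id}$ is the endomorphism associated to $\Ric\omega_1 - \lambda_{\omega_1}\omega_1$, up to the constant fixed by the normalization of $\deg$.

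First, reduce to the locally free case. A coherent subsheaf $\mathcal{F}\subset T^{1,0}X$ may be replaced by its saturation, which is reflexive and only increases $c_1$ paired against $\omega_1^{n-1}$. It is then a subbundle off an analytic set $S$ of codimension at least $2$. On $X\setminus S$, let $\pi$ be the $g_1$-orthogonal projection onto $\mathcal{F}$, with second fundamental form $\beta$. The Chern–Weil/Gauss–Codazzi formula gives
\begin{equation*}
\deg_{\omega_1}\mathcal{F} = \int_{X\setminus S} \left(\tr(\pi K\pi) - |\beta|^2\right)\omega_1^n \leq \int_X \tr(\pi K)\,\omega_1^n
\end{equation*}
(with the paper's normalizing constants). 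Justifying this formula across $S$, using the standard fact that $|\beta|^2$ is integrable and $\mathcal{F}$ has codimension $\ge 2$ singularities, is the technical step. I will quote it from \cite{K14} rather than redo it.

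Next, split $K = \lambda_{\omega_1}\mathrm{Id} + (K-\lambda_{\omega_1}\mathrm{Id})$. This gives
\begin{equation*}
\int_X\tr(\pi K)\,\omega_1^n = \rank\mathcal{F}\cdot\lambda_{\omega_1}\Vol_{\omega_1}(X) + \int_X \tr\big(\pi(K-\lambda_{\omega_1}\mathrm{Id})\big)\,\omega_1^n.
\end{equation*}
The choice $\lambda_{\omega_1} = \deg_{\omega_1}(T^{1,0}X)/\Vol_{\omega_1}(X)$ makes the first term equal $\rank\mathcal{F}\cdot\deg_{\omega_1}(T^{1,0}X)$, up to the factor $n$ in the normalization, which I must track carefully so that this becomes $\rank\mathcal{F}\cdot\mu(T^{1,0}X)$.

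For the error term, pointwise Cauchy–Schwarz for the Hermitian endomorphism $A = K - \lambda\mathrm{Id}$ against the projection gives $|\tr(\pi A)| \le \|A\|_{op}\rank\pi \le |A|\rank\mathcal{F}$. Dividing by $\rank\mathcal{F}$ and by $\int\omega_0\wedge\omega_1^{n-1}$ then yields the bound. The factor $1/n$ in the statement is what remains after matching the convention $\deg = \int 2\pi c_1\wedge\omega_1^{n-1}$ with $\int \tr K\,\omega_1^n/n$.

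I expect the main obstacles to be two things. The first is carefully tracking the constants between $\omega_1^{n-1}$ and $\omega_1^n/n$. The second is the regularity of the projection near $S$, which is handled by the classical argument.
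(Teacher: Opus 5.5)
Your proposal is correct and follows essentially the same route as the paper. Both arguments use Gauss--Codazzi for the $\omega_1$-orthogonal projection, drop the second fundamental form term, split off $\lambda_{\omega_1}\id$, and apply the pointwise bound $\tr(\pi A)\le \rank(\mathcal{F})\,|A|$; the constants work out as you anticipate, via $\alpha\wedge\omega_1^{n-1}=\frac{1}{n}(\tr_{\omega_1}\alpha)\,\omega_1^n$. Your passage to the saturation is slightly more careful than the paper's reduction: the paper only uses that $\mathcal{F}$ is locally free off a codimension-two set, which does not by itself make $\mathcal{F}$ a subbundle there.
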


\begin{proof}
Let $F \subset T^{1,0} X$ be a holomorphic vector bundle and denote $\rank(F) = r$, and let $h_1$ be the Hermitian metric on $T^{1,0} X$ induced by $\omega_1$. Naturally, $h_1$ induces Hermitian metrics on $F$ and $\det F$. Let $\pi_{h_1}: T^{1,0}X \to F$ denote orthogonal projection with respect to $h_1$, so that the Gauss-Codazzi equation, \cite{GH94}, yields
\begin{equation*}
    \Theta_{\det F,h_1} = \tr(\Theta_{F,h_1})= \tr (\pi_{h_1} \Theta_{T^{1,0} X, h_1}|_F) - \tr(\oo{\II}^T \wedge \II)
\end{equation*}
where $\II$ is the second fundamental form of $F \hookrightarrow T^{1,0} X$ and the trace acts on the endomorphism component of the curvature forms $\Theta$. Therefore, applying the dual-Lefschetz operator, $\Lambda_{\omega_1}$, yields
\begin{equation*}
    \Lambda_{\omega_1} \Theta_{\det F, h_1} \leq \tr(\pi_{h_1} \Lambda_{\omega_1}\Theta_{T^{1,0}X, h_1}|_F).
\end{equation*}
It follows that
\begin{align*}
    \mu(F) - \mu(T^{1,0}X) &= \frac{1}{r} \int_X 2 \pi c_1(F) \wedge [\omega_1]^{n-1} - \mu(T^{1,0}X)\\
    &= \frac{1}{rn} \int_X (\Lambda_{\omega_1} \Theta_{\det F, h_1}) \omega_1^n - \mu(T^{1,0}X)\\
    &\leq \frac{1}{rn}\int_X \tr(\pi \Lambda \Theta_{T^{1,0}X}) \omega_1^n - \mu(T^{1,0} X)\\
    &= \frac{1}{rn} \int_X \tr(\pi E) \omega_1^n
\end{align*}
by setting $E = \Lambda_{\omega_1} \Theta_{T^{1,0}X} - \lambda_{\omega_1} \id_{T^{1,0}X}$ and $\lambda_{\omega_1} = \frac{\deg_{\omega_1}(T^{1,0}X)}{\Vol_{\omega_1}(X)}$. Thus
\begin{align*}
    \mu_{[\omega_0], [\omega_1]}(F) - \mu_{[\omega_0],[\omega_1]}(T^{1,0}X) &\leq \frac{ \int_X \tr(\pi E) \omega_1^n}{rn \int_X \omega_0 \wedge \omega_1^{n-1}}\\
    &\leq \frac{ \int_X |E|\omega_1^n}{n \int_X \omega_0 \wedge \omega_1^{n-1}}\\
    &= \frac{\int_X |\Ric \omega_1 - \lambda_{\omega_1} \omega_1|_{\omega_1} \omega_1^n}{n \int_X \omega_0 \wedge \omega_1^{n-1}}
\end{align*}
by noting $E= (\Ric \omega_1 - \lambda_{\omega_1} \omega_1)^{\sharp}$.\\

\noindent Next, assume that $\mathcal{F}$ is a coherent, proper subsheaf of $E$. Then $\mathcal{F}$ is torsion-free and, by \cite[Cor. 5.5.15.]{K14}, there exists an analytic Zariski-open set $U \subseteq X$ with $\codim_{\C}(X\backslash U) \geq 2$ such that $\mathcal{F}|_U$ is locally free. Hence we observe
\begin{align*}
    \deg(\mathcal{F}) &= \int_X 2 \pi c_1(\mathcal{F}) \wedge \omega_1^{n-1} = \int_U 2 \pi c_1(\mathcal{F})|_U \wedge \omega_1^{n-1}\\ 
    &= \int_U 2 \pi c_1((\det \mathcal{F})^{**}|_U) \wedge \omega_1^{n-1} = \int_U 2 \pi c_1(\det(\mathcal{F}|_U)) \wedge \omega_1^{n-1} = \deg(\mathcal{F}|_U).
\end{align*}
Since $\mathcal{F}|_U$ is a holomorphic vector bundle we can apply the previous calculation to find
\begin{align*}
    \mu_{[\omega_0],[\omega_1]}(\mathcal{F}) - \mu_{[\omega_0],[\omega_1]}(T^{1,0} X) &= \mu_{[\omega_0],[\omega_1]}(\mathcal{F}|_U) - \mu_{[\omega_0],[\omega_1]}(T^{1,0}U)\\
    &\leq \frac{\int_U |\Ric \omega_1 - \lambda_{\omega_1,U} \omega_1|_{\omega_1} \omega_1^n}{n\int_U \omega_0 \wedge \omega_1^{n-1}}\\
    &\leq \frac{\int_X |\Ric \omega_1 - \lambda_{\omega_1,X} \omega_1|_{\omega_1} \omega_1^n }{n \int_X \omega_0 \wedge \omega_1^{n-1}},
\end{align*}
since $\lambda_{\omega_1,U} := \frac{\deg_{\omega_1}(T^{1,0}U)}{\Vol_{\omega_1}(U)} = \lambda_{\omega_1,X}$.
\end{proof}

\noindent This leads to the following stability results.

\begin{theorem}
For any compact K\"ahler manifold $X$ with nef $K_X$, the holomorphic tangent bundle, $T^{1,0} X$, is slope $([\omega_0], K_X)$-semistable for all K\"ahler classes $[\omega_0]$.
\end{theorem}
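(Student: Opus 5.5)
We will combine the preceding Lemma with the cscK approximation of Dyrefelt and Song, and then pass to the limit using the Proposition on normalized slopes together with (\ref{nef_L2_ricci_limit}). Fix a Kähler class $[\omega_0]$ and a coherent proper subsheaf $\mathcal{F}\subset T^{1,0}X$. For $0<\e<\e_0$, let $\omega_\e$ be the unique cscK metric in $\e[\omega_0]-2\pi c_1(X)$. By the Proposition,
\begin{equation*}
\mu_{[\omega_0],K_X}(\mathcal{F})-\mu_{[\omega_0],K_X}(T^{1,0}X)=\lim_{\e\to 0}\Big(\mu_{[\omega_0],[\omega_\e]}(\mathcal{F})-\mu_{[\omega_0],[\omega_\e]}(T^{1,0}X)\Big).
\end{equation*}
The Proposition is stated for a bundle $E$, but its proof only uses $c_1$, so it applies verbatim to $\mathcal{F}$ via $c_1(\mathcal{F})=c_1((\det\mathcal{F})^{**})$. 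It therefore suffices to show that the right-hand side is $\le 0$.

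Apply the Lemma with $\omega_1=\omega_\e$. Since $\omega_\e$ is cscK, $\lambda_{\omega_\e}$ equals $R_{\omega_\e}/n$, a constant. Indeed, $\deg_{\omega_\e}(T^{1,0}X)=\int_X \Ric\omega_\e\wedge\omega_\e^{n-1}=\frac1n\int_X R\,\omega_\e^n$. The Lemma then gives
\begin{equation*}
\mu_{[\omega_0],[\omega_\e]}(\mathcal{F})-\mu_{[\omega_0],[\omega_\e]}(T^{1,0}X)\le \frac{\int_X|\Ric^\circ\omega_\e|\,\omega_\e^n}{n\int_X\omega_0\wedge\omega_\e^{n-1}}.
\end{equation*}
Next, $|\Ric^\circ\omega_\e|\le|\Ric\omega_\e+\omega_\e|$, because the difference of squares is $\frac1n(R+n)^2\ge0$. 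Applying Cauchy--Schwarz,
\begin{equation*}
\int_X|\Ric^\circ\omega_\e|\,\omega_\e^n\le \Big(\int_X|\Ric\omega_\e+\omega_\e|^2\omega_\e^n\Big)^{1/2}\Vol(\omega_\e)^{1/2}.
\end{equation*}

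The main step is matching the scaling in $\e$. Let $\nu=\nu(-2\pi c_1(X))$ and first assume $\nu\le n-1$. By the binomial expansion in the Proposition's proof, $\int_X\omega_0\wedge\omega_\e^{n-1}\sim c\,\e^{n-\nu-1}$ with $c>0$, since $[\omega_0]^{n-\nu}\cdot(-2\pi c_1(X))^\nu>0$. Likewise $\Vol(\omega_\e)\le C\e^{n-\nu}$. By (\ref{nef_L2_ricci_limit}), $\int_X|\Ric\omega_\e+\omega_\e|^2\omega_\e^n=o(\e^{n-\nu-1})$. The right-hand side of the bound is therefore
\begin{equation*}
\lesssim \frac{o(\e^{(n-\nu-1)/2})\,\e^{(n-\nu)/2}}{\e^{n-\nu-1}}=o(\e^{1/2})\to0.
\end{equation*}
This shows there is room to spare, and letting $\e\to0$ gives $\mu_{[\omega_0],K_X}(\mathcal{F})\le\mu_{[\omega_0],K_X}(T^{1,0}X)$.

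For $\nu=n$, $K_X$ is nef and big, and the denominator tends to the positive constant $[\omega_0]\cdot(-2\pi c_1(X))^{n-1}$. In this case (\ref{nef_L2_ricci_limit}), read with exponent $\e^{1}$ (or, more weakly, simply $\to0$), still makes the numerator tend to $0$. I expect the only delicate point to be justifying (\ref{nef_L2_ricci_limit}) uniformly in all cases, which the paper asserts follows from Liu's arguments; everything else is bookkeeping of powers of $\e$.
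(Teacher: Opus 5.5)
For $\nu\le n-1$ your argument is correct and is essentially the paper's proof. The only difference is cosmetic. You use the cscK property to write $\Ric\omega_\e-\lambda_{\omega_\e}\omega_\e=\Ric^\circ\omega_\e$ and then bound $|\Ric^\circ\omega_\e|\le|\Ric\omega_\e+\omega_\e|$. The paper instead splits $\Ric\omega_\e-\lambda_\e\omega_\e=(\Ric\omega_\e+\omega_\e)-(1+\lambda_\e)\omega_\e$ and kills the extra term $\sqrt{n}\,|1+\lambda_\e|\Vol(\omega_\e)$ using $\e^{-(n-\nu-1)}\Vol(\omega_\e)\to 0$. Your version simply avoids that second term.

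The $\nu=n$ paragraph, however, has the implication backwards.

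\textbf{What (\ref{nef_L2_ricci_limit}) gives at $\nu=n$.} It reads $\e\int_X|\Ric\omega_\e+\omega_\e|^2\omega_\e^n\to 0$, i.e. the squared $L^2$ norm is only $o(\e^{-1})$. Since $\Vol(\omega_\e)\to(-2\pi c_1(X))^n>0$ and your denominator tends to a positive constant, this bounds the right-hand side only by $o(\e^{-1/2})$, which need not tend to $0$.

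\textbf{What you actually need.} You need the unweighted statement $\int_X|\Ric\omega_\e+\omega_\e|^2\omega_\e^n\to 0$. This is stronger than the $\e$-weighted one, not weaker. The paper's own scaling argument tacitly has the same problem at $\nu=n$: there ${n-1\choose \nu}=0$, so its normalized denominator degenerates.

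\textbf{The fix.} It is easy and makes the proof self-contained. You only need $\int_X|\Ric^\circ\omega_\e|^2\omega_\e^n\to 0$. For a cscK metric this quantity is purely cohomological, using $\Ric\wedge\omega^{n-1}=\frac1n R\,\omega^n$, $\Ric^2\wedge\omega^{n-2}=\frac{1}{n(n-1)}(R^2-|\Ric|^2)\omega^n$ and $R\equiv\bar R$. Writing $[\eta]=-2\pi c_1(X)$,
\begin{equation*}
\int_X|\Ric^\circ\omega_\e|^2\omega_\e^n = n(n-1)\left(\frac{([\eta]\cdot[\omega_\e]^{n-1})^2}{[\omega_\e]^n}-[\eta]^2\cdot[\omega_\e]^{n-2}\right).
\end{equation*}
When $\nu=n$ this tends to $n(n-1)\big([\eta]^n-[\eta]^n\big)=0$, as required. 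Expanding binomially, it is $O(\e^{n-\nu})$ in general. So for $\nu\le n-1$ your right-hand side is $O(\e)$, and this identity could replace the appeal to (\ref{nef_L2_ricci_limit}) in all cases.
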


\begin{proof}
As before, let $\omega_\e$ denote the unique cscK metric in $\e [\omega_0] - 2\pi c_1(X)$. We have
\begin{align*}
    \mu_{[\omega_0], K_X}(\mathcal{F}) &- \mu_{[\omega_0], K_X}(T^{1,0} X)\\ &= \lim_{\e \to 0} \left(\mu_{[\omega_0], [\omega_\e]}(\mathcal{F}) - \mu_{[\omega_0], [\omega_\e]}(T^{1,0} X) \right)\\
    &\leq \lim_{\e \to 0} \frac{\int_X |\Ric \omega_{\e} - \lambda_{\e} \omega_{\e}| \omega_\e^n}{n \int_X \omega_0 \wedge \omega_{\e}^{n-1}}\\
    &\leq \lim_{\e \to 0 } \frac{\left(\int_X |\Ric \omega_{\e} + \omega_{\e}|^2 \omega_\e^n\right)^{1/2} \Vol_{\omega_{\e}}(X)^{1/2} + \sqrt{n}|1 + \lambda_{\e}| \Vol_{\omega_{\e}}(X)}{n \int_X \omega_0 \wedge \omega_{\e}^{n-1}}\\
    &=\lim_{\e \to 0} \frac{\left(\int_X \e^{-(n-\nu-1)}|\Ric \omega_{\e} + \omega_{\e}|^2 \omega_\e^n\right)^{1/2} (\e^{-(n-\nu-1)}\Vol_{\omega_{\e}}(X))^{1/2}}{n \int_X \omega_0 \wedge \e^{-(n-\nu-1)}\omega_{\e}^{n-1}}\\
    &+ \lim_{\e \to 0} \frac{\sqrt{n}|1 + \lambda_{\e}| \e^{-(n-\nu-1)}\Vol_{\omega_{\e}}(X)}{n \int_X \omega_0 \wedge \e^{-(n-\nu-1)}\omega_{\e}^{n-1}}\\
    &= 0,
\end{align*}
where we note that $\lambda_{\e} = \frac{1}{n}R_{\omega_\e}\to -\frac{\nu}{n}$ by, \cite{L23}, $\int_X \e^{-(n-\nu-1)}|\Ric \omega_{\e} + \omega_{\e}|^2 \omega_{\e}^n \to 0$ as $\e \to 0$ by our earlier remark, (\ref{nef_L2_ricci_limit}), and the fact that
\begin{equation*}
    [\omega_0] \cdot \e^{-(n-\nu-1)}[\omega_\e]^{n-1} \to {n-1 \choose \nu} [\omega_0]^{n-\nu} \cdot (-2 \pi c_1(X))^\nu, \quad \e^{-(n-\nu-1)}[\omega_\e]^n \to 0,
\end{equation*}
as $\e \to 0$.
\end{proof}

\noindent Note that when $K_X$ is semi-ample, the same argument applies using the K\"ahler-Ricci flow as well since $|\lambda_{\omega(t)}| \leq C$ by Song-Tian's scalar curvature bound, \cite{ST16}, and equation (\ref{semi_ample_L2_ricci_limit}).

\begin{theorem}
For any K-semistable Fano manifold $X$, the holomorphic tangent bundle, $T^{1,0} X$, is slope $-K_X$-semistable.
\end{theorem}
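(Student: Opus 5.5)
We run the normalized Kähler-Ricci flow $\frac{\d \omega}{\d t} = -\Ric \omega + \omega$ with $\omega(0)=\omega_0 \in 2\pi c_1(X)$. It exists for all time and stays in the class $2\pi c_1(X)$. We then apply the preceding lemma with $[\omega_1] = [\omega(t)] = 2\pi c_1(X)$ for every $t$. Since $[\omega(t)]$ is fixed and Kähler, the normalized slope $\mu_{[\omega_0],[\omega(t)]}$ is just a positive multiple of the classical $-K_X$-slope. Indeed, with $[\omega_0]=[\omega(t)]=2\pi c_1(X)$, the denominator is $\int_X (2\pi c_1(X))^n>0$, so semistability in one sense is equivalent to semistability in the other. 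The constant is
\begin{equation*}
\lambda_{\omega(t)} = \frac{\deg_{\omega(t)}(T^{1,0}X)}{\Vol_{\omega(t)}(X)} = \frac{\int_X 2\pi c_1(X)\wedge (2\pi c_1(X))^{n-1}}{\int_X (2\pi c_1(X))^n} = 1,
\end{equation*}
independent of $t$ (with the paper's normalization $\deg = \int 2\pi c_1 \wedge \omega^{n-1}$; if the normalization carries a factor $n$, one adjusts accordingly, and the constant is still the one that makes $\Ric\omega-\lambda\omega$ the natural Einstein defect).

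For any coherent proper subsheaf $\mathcal{F}\subset T^{1,0}X$, the lemma then gives, for every $t$,
\begin{equation*}
\mu(\mathcal{F}) - \mu(T^{1,0}X) \leq \frac{\int_X |\Ric\omega(t) - \omega(t)|\,\omega(t)^n}{n\int_X (2\pi c_1(X))^n}.
\end{equation*}
Applying Cauchy–Schwarz and using the constant volume bounds the numerator by
\begin{equation*}
\Big(\int_X |\Ric\omega - \omega|^2\omega^n\Big)^{1/2}\Big(\int_X(2\pi c_1(X))^n\Big)^{1/2}.
\end{equation*}
By Proposition \ref{L2_ricci_scalar_equality}, the integral $\int_X |\Ric\omega-\omega|^2\omega^n$ equals $\int_X (R-n)^2\omega^n$, which is at most $\norm{R-n}_{C^0}^2\int_X (2\pi c_1(X))^n$.

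K-semistability is equivalent to the Mabuchi K-energy being bounded below on $2\pi c_1(X)$ \cite{L17}. The theorem of \cite{PSSW09} then gives $\norm{R_{\omega(t)}-n}_{C^0}\to 0$ as $t\to\infty$. Letting $t\to\infty$, the right-hand side tends to zero, while the left-hand side does not depend on $t$. Hence $\mu(\mathcal{F})\le\mu(T^{1,0}X)$, which is $-K_X$-semistability.

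The only delicate point is already handled inside the lemma: passing from subbundles to general coherent subsheaves via the codimension-two locally free locus. What remains to check is that the lemma's constant $\lambda_{\omega_1}$ coincides with the Einstein constant $1$ for this flow. If it did not, the error term would not vanish, so this is the step I would verify carefully. The argument is otherwise a direct parallel of the proof of Theorem \ref{fano_MY}.
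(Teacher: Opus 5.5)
Your proposal is correct and follows the paper's proof: the normalized Kähler-Ricci flow in $2\pi c_1(X)$, the slope lemma with $\omega_1=\omega(t)$ and $\lambda_{\omega(t)}=1$, Cauchy--Schwarz, and then Proposition \ref{L2_ricci_scalar_equality} together with the Phong--Song--Sturm--Weinkove convergence $\norm{R-n}_{C^0}\to 0$, which applies because K-semistability gives a lower bound on the K-energy. The step you flagged is fine, since $\lambda_{\omega_1}=\frac{1}{n}\bar R_{\omega_1}=1$ whenever $[\omega_1]=2\pi c_1(X)$, and your observation that the left-hand side is independent of $t$ just makes explicit what the paper's limit in $t$ is doing.
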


\noindent Let us denote the normalized slope as $\mu_{K_X^{-1}} = \mu_{[\omega_0], K_X^{-1}}$ when $2 \pi c_1(X) = [\omega_0]$.

\begin{proof}
Using the Fano K\"ahler-Ricci flow, we have 
\begin{align*}
    \mu_{K_X^{-1}}(\mathcal{F}) - \mu_{K_X^{-1}}(T^{1,0} X) &= \lim_{t \to \infty} \left(\mu_{\omega(t)}(\mathcal{F}) - \mu_{\omega(t)}(T^{1,0} X)\right)\\
    &\leq \lim_{t \to \infty}  \frac{\int_X |\Ric \omega(t) - \lambda(t) \omega(t)| \omega(t)^n}{n \int_X \omega(t)^n}\\
    &\leq \lim_{t \to \infty} \frac{\left(\int_X |\Ric \omega(t) - \omega(t)|^2 \omega(t)^n\right)^{1/2} \Vol_{\omega(t)}(X)^{1/2}}{\Vol_{\omega(t)}(X)}\\
    & = 0,
\end{align*}
by observing that $\lambda(t) = 1$ and $[\omega(t)] = 2\pi c_1(X)$ for all $t \in [0,\infty)$ and applying earlier estimates. Since $T^{1,0}X$ is $([\omega_0], K_X^{-1})$-semistable it is $K_X^{-1}$-semistable. 
\end{proof}

\begin{proposition}
Let $X$ be any Fano manifold, where $V$ is a non-trivial holomorphic vector field whose imaginary
part, $\Imag V$, induces an $S^1$-action on $X$, and the $V$-modified Mabuchi K-energy is bounded below on $2 \pi c_1(X)$. We have
\begin{equation*}
    \mu_{K_X^{-1}}(\mathcal{F}) - \mu_{K_X^{-1}}(T^{1,0} X) \leq \left(\frac{\sqrt{n}+1}{n\sqrt{n}}\right) C_V^{1/2},
\end{equation*}
for any coherent, proper subsheaf $\mathcal{F} \subset T^{1,0} X$.
\end{proposition}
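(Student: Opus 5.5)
We run the $V$-modified K\"ahler-Ricci flow $\omega(t)$ in $2\pi c_1(X)$ and argue as in the K-semistable Fano case. Since $[\omega(t)] = 2\pi c_1(X)$ for all $t$, the normalized slope $\mu_{K_X^{-1}}$ coincides with $\mu_{[\omega_0],[\omega(t)]}$ for every $t$. So for any coherent, proper subsheaf $\mathcal{F}$, the preceding lemma (with $\omega_0 = \omega_1 = \omega(t)$) gives
\begin{equation*}
    \mu_{K_X^{-1}}(\mathcal{F}) - \mu_{K_X^{-1}}(T^{1,0}X) \leq \frac{\int_X |\Ric \omega(t) - \omega(t)|\, \omega(t)^n}{n \Vol_{\omega(t)}(X)}.
\end{equation*}
Here $\lambda_{\omega(t)} = 1$ because $\deg_{\omega(t)}(T^{1,0}X) = \int_X 2\pi c_1(X)\wedge[\omega(t)]^{n-1}$ and $\Vol = [\omega(t)]^n$ with the usual normalization. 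We then take $\limsup_{t\to\infty}$ of the right-hand side.

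Next, split the Ricci deviation into its traceless part and its trace part:
\begin{equation*}
    \Ric\omega - \omega = \Ric^\circ \omega + \tfrac{1}{n}(R-n)\,\omega,
\end{equation*}
so that
\begin{equation*}
    |\Ric \omega - \omega| \leq |\Ric^\circ\omega| + \tfrac{1}{\sqrt{n}}|R-n|.
\end{equation*}
\begin{itemize}
\item \emph{Trace term.} Write $R - n = (R - n - \Delta\theta_V) + \Delta\theta_V$. By Theorem \ref{scalar_and_div_bounds} the first piece tends to $0$ in $C^0$, and $\Delta\theta_V = \div_{\omega(t)} V$. Hence the normalized integral of $\frac{1}{\sqrt n}|R-n|$ has $\limsup$ at most $\frac{1}{\sqrt n} C_V^{1/2}$.
\item \emph{Traceless term.} By Cauchy--Schwarz, $\int_X |\Ric^\circ\omega|\,\omega^n \leq \big(\int_X|\Ric^\circ\omega|^2\omega^n\big)^{1/2}\Vol^{1/2}$. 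From the proof of Theorem \ref{modified_fano_MY}, with $\e=1$,
\begin{equation*}
    \int_X|\Ric^\circ\omega|^2\omega^n \leq 2\int_X\Big(-\frac{R}{n}\Big)(R-n-\Delta\theta_V)\,\omega^n + \int_X(\Delta\theta_V)^2\omega^n.
\end{equation*}
The first integral is $o(1)\cdot\Vol$ by the uniform bound on $R$ and the $C^0$ convergence, and the second is at most $\norm{\Delta\theta_V}_{C^0}^2 \Vol$. Therefore $\limsup \Vol^{-1}\int_X|\Ric^\circ\omega|\,\omega^n \leq C_V^{1/2}$.
\end{itemize}

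Combining the two terms and dividing by $n$ gives the bound
\begin{equation*}
    \frac{1}{n}\Big(C_V^{1/2} + \frac{1}{\sqrt n}C_V^{1/2}\Big) = \Big(\frac{\sqrt n + 1}{n\sqrt n}\Big)C_V^{1/2},
\end{equation*}
which is exactly the claimed constant. The left-hand side is independent of $t$, so taking $\limsup$ finishes the proof.

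The main obstacle is the traceless term. There is no $L^2$ Ricci-to-scalar identity like Proposition \ref{L2_ricci_scalar_equality} for the modified flow, so we must instead reuse the integrated evolution identity from the proof of Theorem \ref{modified_fano_MY}. That identity relies on $\frac{d}{dt}\int_X R\,\omega^n = 0$ and on the absorption step with $\e = 1$. We also need to check that the cross term $\int_X \frac{R}{n}(R-n-\Delta\theta_V)\,\omega^n$ vanishes in the limit relative to the fixed volume, and that the constants combine to give precisely $C_V^{1/2}$ rather than a larger multiple. If the absorption instead produced a worse constant, a fallback would be to bound $|\Ric\omega-\omega|$ directly in $L^2$ via $|\Ric\omega - \omega|^2 = |\Ric^\circ\omega|^2 + \frac1n(R-n)^2$. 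That would give $\sqrt{(1+1/n)C_V}/n$, which is still at most the stated bound.
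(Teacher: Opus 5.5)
Your proposal is correct and follows the paper's proof essentially step for step. The shared steps are: the slope lemma along the $V$-modified flow with $\lambda(t)=1$; the split $|\Ric\omega-\omega|\le|\Ric^\circ\omega|+\frac{1}{\sqrt n}|R-n|$; Cauchy--Schwarz plus the $\e=1$ absorption estimate from the proof of Theorem \ref{modified_fano_MY} for the traceless part; and $R-n=(R-n-\Delta\theta_V)+\Delta\theta_V$ for the trace part.

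One harmless aside concerns the estimate you quote. Its cross term should carry the factor $\frac{n-1}{n}R$ rather than $-\frac{R}{n}$, because the paper drops $-\int_X R\,\d_t\omega^n$ when it rewrites $\int_X \d_t R\,\omega^n$ as $\frac{d}{dt}\int_X R\,\omega^n$. This changes nothing, since $\norm{R}_{C^0}\norm{R-n-\Delta\theta_V}_{C^0}\to 0$. Your fallback via $|\Ric\omega-\omega|^2=|\Ric^\circ\omega|^2+\frac1n(R-n)^2$ would in fact give the slightly sharper constant $\frac1n\sqrt{\frac{n+1}{n}}\,C_V^{1/2}$.
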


\begin{proof}
Using the $V$-modified K\"ahler-Ricci flow, again this follows from the fact that $\lambda(t) = 1$ and $[\omega(t)] = 2 \pi c_1(X)$ for all $t \in [0,\infty)$, and 
\begin{align*}
    \mu_{K_X^{-1}}(\mathcal{F}) - \mu_{K_X^{-1}}(T^{1,0}X) &\leq
    \limsup_{t \to \infty}  \frac{\int_X |\Ric \omega(t) - \lambda(t) \omega(t)| \omega(t)^n}{n \int_X \omega(t)^n}\\
    &\leq \limsup_{t \to \infty} \frac{\left(\int_X |\Ric^\circ \omega|^2 \omega^n \right)^{1/2} \Vol_{\omega(t)}(X)^{1/2} + \frac{1}{\sqrt{n}}\int_X |R - n| \omega^n}{n\Vol_{\omega(t)}(X)}\\
    &\leq \frac{(C_V(2 \pi c_1(X))^n )^{1/2} ((2 \pi c_1(X))^n)^{1/2} + \frac{1}{\sqrt{n}} C_V^{1/2} (2 \pi c_1(X))^n}{n (2 \pi c_1(X))^n}\\
    &= \left(\frac{\sqrt{n}+1}{n\sqrt{n}}\right) C_V^{1/2}.
\end{align*}
\end{proof}

\section{Connection with the Weil-Petersson Metric}

\noindent We would also like some geometric characterizations of equality in the Miyaoka-Yau inequalities in Section $2$. In the case that $X$ is a smooth projective variety and $K_X$ is nef and big, Greb-Kebekus-Peternell-Taji, \cite{GKPT19}, prove that if equality holds in Theorem \ref{semi_ample_MY} then there exists a ball quotient $Y$ such that $X_{can}$ admits a finite, Galois, quasi-etale morphism $f: Y \to X_{can}$. In the case that $X$ is K-semistable Fano, then equality holds in Theorem \ref{fano_MY} if and only if it admits a finite, codimension-one, etale cover $f: \P^n \to X$, \cite{H24}. In the case that we have an elliptic fibration of a K\"ahler three-fold over a K\"ahler surface, it is known due to Wang, \cite{W26}, that equality holds in Theorem \ref{semi_ample_MY} if and only if the $j$-invariant of the fibers is constant. We extend this to the following setting.

\begin{theorem}\label{MY_equality}
If $X$ is any compact K\"ahler manifold with semi-ample $K_X$, smooth canonical model, $X_{can}$, and $\Kod(X) = n-1$, then $f: X \to X_{can}$ is a holomorphic fiber bundle away from singular fibres if and only if 
\begin{equation*}
    (2(n+1) c_2(X) - n c_1(X)^2) \cdot(- 2 \pi c_1(X))^{n-2} = 0.
\end{equation*}
\end{theorem}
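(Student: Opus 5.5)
The plan is to first establish the identity announced in the introduction,
\begin{equation*}
    (2(n+1) c_2(X) - n c_1(X)^2) \cdot (-2 \pi c_1(X))^{n-2} = \frac{12(n+1)}{\pi} [\omega_{WP}] \cdot [\eta_{FS}]^{n-2},
\end{equation*}
and then read off Theorem \ref{MY_equality} from it. Here $\omega_{WP}$ is the Weil-Petersson form on $X_{can}^\circ$, the complement of the critical values of $f$, extended as a closed positive current on $X_{can}$.

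\textbf{Step 1: reduce to a fibre integral.} When $\kappa = n-1$, the general fibre $X_y$ is a smooth curve with $K_{X_y}$ semi-ample and of Kodaira dimension $0$. Hence it is an elliptic curve, and $f$ is an elliptic fibration. The class $(-2\pi c_1(X))^{n-2} = f^*[\eta_{FS}]^{n-2}$ is pulled back from the base. So by the projection formula, the Miyaoka-Yau quantity equals
\begin{equation*}
    \int_{X_{can}} f_*\big(2(n+1)c_2(X) - n c_1(X)^2\big)\wedge \eta_{FS}^{n-2}.
\end{equation*}
We need to compute $f_*c_1(X)^2$ and $f_*c_2(X)$ as $(1,1)$-classes on the smooth base.

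\textbf{Step 2: compute the two pushforwards.} Canonical bundle formula: $K_X = f^*(K_{X_{can}} + L)$ with $L$ the Hodge (modular) line bundle, since multiple fibres are presumably absent or contribute only divisorially. Then $c_1(X)^2 = f^*(\cdots)$ and $f_*c_1(X)^2 = 0$, because a pullback of a degree-$4$ class from a base of dimension $n-1$ pushes forward to zero.

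For $c_2$, use the relative tangent sequence $0 \to T_{X/X_{can}} \to T_X \to f^*T_{X_{can}}$ on the smooth locus. It gives $f_*c_2(X) = f_*c_2^{\mathrm{sing}} + \chi_{\mathrm{top}}(X_y)\,c_1(X_{can}) = f_*c_2^{\mathrm{sing}}$, since $\chi_{\mathrm{top}}(X_y)=0$. The remaining contribution is localized on the singular fibres, i.e.\ it is the discriminant class $\frac{1}{12}$-weighted as in Kodaira's formula: $f_*c_2(X) = 12\, c_1(L) = [\Delta]$ in the sense of Kodaira/Grothendieck-Riemann-Roch. The cleanest route is to apply GRR to $f_*\OO_X$ and $f_*K_{X/X_{can}}$ (Noether's formula in families): $c_1(f_*K_{X/X_{can}}) = \frac{1}{12}f_*(c_1^2 + c_2)$ of the relative tangent complex.

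\textbf{Step 3: identify $c_1(L)$ with the Weil-Petersson class.} The Hodge bundle $L = f_*K_{X/X_{can}}$ carries the $L^2$ metric $\int_{X_y}\sqrt{-1}\,\sigma\wedge\oo{\sigma}$. By the standard computation (Tian, Song-Tian \cite{ST12}), its curvature on $X_{can}^\circ$ is $\omega_{WP}$ up to a positive constant, and the metric has at worst logarithmic singularities at the discriminant. This is what lets the equality $2\pi c_1(L) = c\,[\omega_{WP}]$ hold as currents with no extra divisorial term. Combining Steps 1--3 gives the identity with constant $12(n+1)/\pi$, after tracking the $2\pi$ normalizations.

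\textbf{Step 4: deduce Theorem \ref{MY_equality}.} Since $\omega_{WP}\ge 0$ and $\eta_{FS}$ is Kähler on the smooth base, the pairing $[\omega_{WP}]\cdot[\eta_{FS}]^{n-2}$ vanishes iff $\omega_{WP}\equiv 0$ on $X_{can}^\circ$. By definition of the Weil-Petersson metric this is equivalent to the Kodaira-Spencer map vanishing, i.e.\ the $j$-invariant is constant. Fischer-Grauert then gives that $f$ is a holomorphic fibre bundle over $X_{can}^\circ$, and the converse is immediate.

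\textbf{Main obstacle.} The hardest point is justifying, in Steps 2--3, that the singular fibres contribute no additional term: the class $f_*c_2(X)$ must equal exactly $12$ times the curvature current of the $L^2$ metric, with no residual divisorial part. This requires controlling the asymptotics of the $L^2$ metric near the discriminant. I would try first to use the log-type estimates of Song-Tian and the semistable-reduction/Kodaira table to check each fibre type locally.
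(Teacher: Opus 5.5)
\textbf{The gap is in Step 3, exactly where you suspected, and your proposed fix does not hold.} Your route is the paper's: Grothendieck--Riemann--Roch gives $c_1(f_*K_{X/X_{can}})=\frac{1}{12}f_*c_2(X)$, $f_*c_1(X)^2=0$ because $c_1(X)$ is pulled back, and positivity of $\omega_{WP}$ together with Kodaira--Spencer/Fischer--Grauert finishes. The problem is your claim that the $L^2$ metric on $f_*K_{X/X_{can}}$ has at worst logarithmic singularities. That is true over fibres with unipotent local monodromy (type $I_m$). Over fibres with non-unipotent monodromy the metric has a fractional pole instead.

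Take an $I_0^*$ fibre. Locally $X$ is the crepant resolution of $(E\times\Delta_t)/\pm1$ with $s=t^2$. The generator $\sigma=(dz\wedge dt)/ds$ of $f_*K_{X/X_{can}}$ restricts to $dz/(2t)$ on $X_s$. Hence $\int_{X_s}\sqrt{-1}\,\sigma\wedge\oo{\sigma}=C|s|^{-1}$, and the curvature current contains $\pi[s=0]$, even though the family is locally trivial over the punctured disc. In general
\begin{equation*}
2\pi c_1(f_*K_{X/X_{can}}) = [\omega_{WP}] + 2\pi[B],
\end{equation*}
where $\omega_{WP}$ is the trivially extended Weil--Petersson form of $X_{can}^\circ$. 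Here $B\geq 0$ is the discriminant part of Kodaira's canonical bundle formula, with coefficient $\frac{1}{2}$ over $I_m^*$ and $\frac{1}{6},\frac{1}{4},\frac{1}{3}$ over $II,III,IV$, etc. No fibre-by-fibre check will make $B$ disappear.

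\textbf{Consequence: the direction you call immediate in Step 4 fails, and the statement is false as written.} Let $C$ be hyperelliptic of genus $g\geq 2$ with involution $\iota$, let $E$ be an elliptic curve, and let $X$ be the minimal resolution of $(E\times C)/\langle(-1,\iota)\rangle$. Then:
\begin{itemize}
\item $f\colon X\to C/\iota\cong\P^1$ is a holomorphic $E$-bundle away from $2g+2$ fibres of type $I_0^*$;
\item $\chi(\OO_X)=g+1$, so $K_X=f^*\OO(g-1)$ is semi-ample, $X_{can}=\P^1$ and $\Kod(X)=1=n-1$;
\item yet $6c_2(X)-2c_1(X)^2=6e(X)=72(g+1)\neq 0$.
\end{itemize}
Products with curves of genus $\geq 2$ give counterexamples in every dimension.

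What your argument does prove, once the decomposition above is in place, is the other direction. Since $\omega_{WP}\geq 0$ and $B\geq 0$, equality forces $\omega_{WP}\equiv 0$ on $X_{can}^\circ$, and Fischer--Grauert then gives local triviality there. The ``if'' direction needs an extra hypothesis forcing $B=0$, for instance that no codimension-one singular fibre has non-unipotent monodromy.

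The paper's own proof has the same hole. It identifies the current $\omega_{WP}$ with $2\pi c_1(f_*K_{X/X_{can}})$ by an extension result, which is correct only if that current includes the Dirac masses $2\pi B$. It then uses the fact that $\omega_{WP}=0$ away from singular fibres if and only if $f$ is a bundle there, and that fact says nothing about $B$. Your instinct that this was the main obstacle was right; it is in fact an obstruction.
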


\noindent This will be a consequence of the following result which establishes an intriguing connection between the Miyaoka-Yau quantity and the Weil-Petersson metric. First, note that due to Tian, \cite{T87},\cite[Eqn. (3.18)]{ST12}, the Weil-Petersson metric, $\omega_{WP}$, is the known to be the curvature form of a Hermitian metric on the $(n-\kappa)$-Hodge bundle $f_* \Omega^{n-\kappa}_{X/X_{can}}$ over the regular part of the variety $X_{can}$ and excluding the image of critical points of the map, and by \cite{CS21} extends to a $d$-closed positive $(1,1)$-current over $X_{can}$ when the canonical model is smooth, and represents $2 \pi c_1(f_* K_{X/X_{can}})$.

\begin{theorem}\label{MY_WP}
Let $X$ be any compact K\"ahler manifold with semi-ample $K_X$ and smooth canonical model, $X_{can}$. If $\Kod(X) = n-1$ then
\begin{equation*}
    (2(n+1) c_2(X) - n c_1(X)^2) \cdot (-2 \pi c_1(X))^{n-2} = \frac{12(n+1)}{\pi}[\omega_{WP}] \cdot [\eta_{FS}]^{n-2}.
\end{equation*}
\end{theorem}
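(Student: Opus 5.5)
The plan is to reduce the statement to a computation on an elliptic surface. Since $\Kod(X) = n-1$ and $K_X$ is semi-ample, the general fibre $F$ of $f: X \to X_{can}$ is a curve with $K_F = K_X|_F$ trivial, so it is an elliptic curve. On the left-hand side, the term $c_1(X)^2 \cdot (-2\pi c_1(X))^{n-2}$ is a multiple of $c_1(X)^n = 0$, because $\nu(-2\pi c_1(X)) = \kappa < n$. Hence the left-hand side reduces to
\begin{equation*}
2(n+1)\, c_2(X) \cdot [f^*\eta_{FS}]^{n-2}.
\end{equation*}
This is where the factor $24(n+1)$ in $\frac{12(n+1)}{\pi} = \frac{24(n+1)}{2\pi}$ will come from, via Noether's formula $e = 12\chi$.

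\textbf{Cutting down to a surface.} Write $[\eta_{FS}] = \frac{1}{k}[\omega_{FS}|_{X_{can}}]$, where $\omega_{FS}|_{X_{can}}$ represents $2\pi c_1(\OO(1))$. Choose $n-2$ general hyperplane sections of $X_{can} \subseteq \P H^0(X, K_X^{\otimes k})$ and let $C$ be the smooth curve they cut out, so that $[\omega_{FS}]^{n-2}$ is dual to $(2\pi)^{n-2}[C]$. Set $S = f^{-1}(C)$. By Bertini applied to the base-point-free system $|K_X^{\otimes k}|$, the surface $S$ is smooth. Moreover $C$ meets the discriminant locus of $f$ transversally in finitely many points, so $f|_S : S \to C$ is a relatively minimal elliptic fibration.

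\textbf{Computing $c_2(X)$ on $S$.} The normal bundle of $S$ is $N_S = f^*N_{C/X_{can}}$, so
\begin{equation*}
c_2(X)|_S = c_2(S) + c_1(S)\, c_1(N_S) + c_2(N_S).
\end{equation*}
The last term vanishes, since it is pulled back from a curve. The middle term is a multiple of $K_S \cdot F$, which vanishes by adjunction. Hence $c_2(X) \cdot [f^*\eta_{FS}]^{n-2} = (2\pi/k)^{n-2}\, e(S)$.

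\textbf{The Euler characteristic of $S$.} For the relatively minimal elliptic surface $S \to C$, Kodaira's canonical bundle formula (including any multiple fibres) shows that $K_S$ is numerically a rational multiple of $F$, so $K_S^2 = 0$. Noether's formula then gives $e(S) = 12\chi(\OO_S)$. By the Leray spectral sequence and relative duality, $R^1 f_*\OO_S \cong (f_*\omega_{S/C})^\vee$ modulo torsion, and I expect the torsion to vanish in the Kähler setting. Riemann–Roch on $C$ then yields $\chi(\OO_S) = \deg f_*\omega_{S/C}$.

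\textbf{Identifying with the Weil–Petersson class.} Base change for general $C$ gives $f_*\omega_{S/C} = (f_*K_{X/X_{can}})|_C$. By the result of \cite{CS21} quoted above, $[\omega_{WP}] = 2\pi c_1(f_*K_{X/X_{can}})$, so
\begin{equation*}
\deg f_*\omega_{S/C} = \frac{1}{2\pi}\int_C \omega_{WP} = \frac{1}{2\pi}(2\pi)^{-(n-2)}\, k^{\,n-2}\,[\omega_{WP}]\cdot[\eta_{FS}]^{n-2}.
\end{equation*}
Combining the steps, the powers of $2\pi/k$ cancel and the left-hand side equals $2(n+1)\cdot 12 \cdot \frac{1}{2\pi}[\omega_{WP}]\cdot[\eta_{FS}]^{n-2}$, which is the claimed $\frac{12(n+1)}{\pi}[\omega_{WP}]\cdot[\eta_{FS}]^{n-2}$.

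\textbf{Main obstacle.} The delicate step is the last identification. One must justify that the current $\omega_{WP}$, which is only a current across the discriminant, integrates over a general $C$ to $2\pi\deg\big((f_*K_{X/X_{can}})|_C\big)$. This requires that the cohomology class of the current restricts correctly to $C$, using \cite{CS21} together with genericity of $C$. One must also handle multiple or singular fibres in the equality $\chi(\OO_S) = \deg f_*\omega_{S/C}$. I would address this by invoking Kodaira's formula $e(S) = \sum_p e(F_p)$ for the singular fibres, together with the local computation of the degree of the Hodge bundle.
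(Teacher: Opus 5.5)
Your argument is correct, but it takes a different route from the paper. The paper never slices the base. It applies relative Grothendieck--Riemann--Roch \cite{BTT85} to $\OO_X$ on the whole map $f: X\to X_{can}$. Using $Rf_*\OO_X=\OO_{X_{can}}-(f_*K_{X/X_{can}})^*$, it gets $\frac{1}{2\pi}[\omega_{WP}]=c_1(f_*K_{X/X_{can}})=f_*\td_2(\T_f)$. It then expands $c_1(\T_f)$ and $c_2(\T_f)$ in terms of $c_i(X)$ and $f^*c_i(X_{can})$, and discards every pulled-back term against $f^*\eta_{FS}^{n-2}$ for degree reasons, so that only $2(n+1)c_2(X)\cdot f^*\eta_{FS}^{n-2}$ survives.

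You carry out the same Riemann--Roch identity after restricting to a general complete-intersection curve $C$. Noether's formula with $K_S^2=0$, together with the Leray computation $\chi(\OO_S)=\deg f_*\omega_{S/C}$, replaces GRR. Your adjunction identity $c_2(X)|_S=c_2(S)$ replaces the Chern-class bookkeeping for the virtual bundle $\T_f$.

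What each route buys:
\begin{itemize}
\item Yours uses only classical surface theory. Genericity of $C$ also lets you avoid the codimension $\geq 2$ locus where $f$ may have higher-dimensional fibres. There $R^2f_*\OO_X$ need not vanish and relative duality for $R^1f_*\OO_X$ is not immediate. These points do not affect $c_1$, but the paper passes over them.
\item The paper's route is shorter and generalises. The same GRR computation twisted by $L_0^{-j}$ gives its theorem for $0<\kappa\le n-2$. There the fibres have dimension at least $2$, and no Noether formula is available on a slice.
\end{itemize}

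On the obstacles you flag:
\begin{itemize}
\item Torsion and singular fibres are not a problem. For an elliptic fibration of a compact complex surface, every fibre, multiple fibres included, has $h^0(\OO_F)=1$. Hence $R^1f_*\OO_S$ is locally free, and $\chi(\OO_S)=\deg f_*\omega_{S/C}$ needs no local analysis of singular fibres and no Kähler hypothesis.
\item You never need to integrate the current $\omega_{WP}$ over $C$. Given $[\omega_{WP}]=2\pi c_1(f_*K_{X/X_{can}})$ from \cite{CS21}, which the paper also uses, the equality $\deg\big((f_*K_{X/X_{can}})|_C\big)=c_1(f_*K_{X/X_{can}})\cdot[C]$ is purely cohomological.
\item The step that does deserve a sentence is the base change $(f_*K_{X/X_{can}})|_C\cong f_*\omega_{S/C}$; note that $\omega_{S/C}=K_{X/X_{can}}|_S$ by adjunction. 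It follows from Grauert's theorem. A general $C$ lies in the open set, with complement of codimension $\geq 2$, over which $f$ is flat and $h^0(F_y,\omega_{F_y})=1$.
\item $K_S^2=0$ is immediate from adjunction, since a power of $K_S$ is pulled back from $C$. Kodaira's canonical bundle formula is not needed.
\end{itemize}
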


\begin{proof}
Since $X_{can}$ is a smooth we may apply the relative Grothendieck-Riemann-Roch theorem for holomorphic maps between complex manifolds, \cite{BTT85}, to $\OO_X$, i.e.
\begin{equation*}
    \ch(Rf_* \OO_X) = f_* (\ch(\OO_X)\td(\T_{f})),
\end{equation*}
where $\T_f = \T_X - f^* \T_{X_{can}}$ denotes the virtual relative tangent sheaf in the Grothendieck group of coherent analytic sheaves, and $\T_X$, $\T_{X_{can}}$, the holomorphic tangent bundles. Firstly, since
\begin{equation*}
    \ch(\OO_X) = 1 + c_1(\OO_X) + \frac{1}{2!} c_1(\OO_X)^2 + \cdots + \frac{1}{n!} c_1(\OO_X)^n = 1
\end{equation*}
and 
\begin{equation*}
    \td(\T_f) = 1 + \frac{c_1(\T_f)}{2} + \frac{c_1(\T_f)^2 + c_2(\T_f)}{12} +  \frac{c_1(\T_f) \wedge c_2(\T_f)}{24} + \dots
\end{equation*}
and $\Kod(X) = n-1$, we find
\begin{equation*}
    \ch_1(R f_* \OO_X) = f_* \left(\frac{c_1(\T_f)^2 + c_2(\T_f)}{12}\right).
\end{equation*}
On the other hand, since the fibres are $1$-dimensional, the higher direct images of $\OO_X$ vanish, i.e. for $i > 1$, and since $f$ has connected fibres,
\begin{equation*}
    Rf_* \OO_X = \sum_{i \geq 0} (-1)^i R^i f_* \OO_X = f_* \OO_X - R^1 f_* \OO_X = \OO_{X_{can}} - (f_*K_{X/X_{can}})^*
\end{equation*}
where the last equality follows from relative Serre duality for derived functors. Therefore,
\begin{equation*}
    \ch_1(R f_* \OO_X) = c_1(R f_* \OO_X) = c_1(f_* K_{X/X_{can}}) = \frac{1}{2\pi}[\omega_{WP}],
\end{equation*}
and
\begin{equation*}
    \frac{1}{2\pi} [\omega_{WP}] = f_* \left(\frac{c_1(\T_f)^2 + c_2(\T_f)}{12}\right).
\end{equation*}
Next, since $\T_f = \T_X - f^*\T_{X_{can}}$, observe that the first and second Chern classes satisfy the following relations
\begin{align*}
    c_1(\T_X) &= c_1(\T_f) + c_1(f^* \T_{X_{can}}),\\
    c_2(\T_X) &= c_2(\T_f)+ c_1(\T_f) \wedge c_1 (f^* \T_{X_{can}}) + c_2(f^* \T_{X_{can}}),
\end{align*}
and we apply these in the following calculation,
\allowdisplaybreaks
\begin{align*}
    \int_{X_{can}} \frac{12(n+1)}{\pi} &[\omega_{WP}] \wedge [\eta_{FS}]^{n-2}\\
    =& \int_{X_{can}} 24(n+1) f_* \left(\frac{c_1(\T_f)^2 + c_2(\T_f)}{12}\right) \wedge [\eta_{FS}]^{n-2}\\
    =& \int_X 2(n+1) (c_1(\T_f)^2 + c_2(\T_f)) \wedge f^*\eta_{FS}^{n-2}\\
    =& \int_X 2(n+1)\left(c_1(X) - f^*c_1(X_{can})\right)^2 \wedge f^*\eta_{FS}^{n-2}\\
    +&\int_X 2(n+1) \Big( c_2(X) - c_1(\T_f) \wedge f^* c_1(X_{can}) - f^* c_2(X_{can})\Big) \wedge f^*\eta_{FS}^{n-2}\\ 
    =& \int_X 2(n+1)\left(\frac{1}{2\pi}f^*\eta_{FS} + f^*c_1(X_{can})\right)^2 \wedge f^*\eta_{FS}^{n-2}\\
    +&\int_X 2(n+1) \Big( c_2(X) + \Big(\frac{1}{2\pi} f^*\eta_{FS} + f^*c_1(X_{can})\Big) \wedge f^* c_1(X_{can})\\
    &- f^* c_2(X_{can})\Big) \wedge f^*\eta_{FS}^{n-2}\\ 
    =& \int_X 2(n+1) c_2(X) \wedge f^*\eta_{FS}^{n-2}\\
    =& \int_X (2(n+1) c_2(X) - n c_1(X)^2) \wedge (-2 \pi c_1(X))^{n-2}.
\end{align*}
Note that the pullback of an $(n,n)$-class must be $0$ and $-2\pi c_1(X) = [f^*\eta_{FS}]$.
\end{proof}

\noindent In fact, the semi-positivity of the Weil-Petersson metric yields a new proof of the Miyaoka-Yau inequality.

\begin{corollary}
Let $X$ be a compact K\"ahler manifold with semi-ample $K_X$, smooth canonical model, $X_{can}$, and $\Kod(X) = n-1$, then
\begin{equation*}
    (2(n+1) c_2(X) - n c_1(X)^2) \cdot (-2 \pi c_1(X))^{n-2} \geq 0.
\end{equation*}
\end{corollary}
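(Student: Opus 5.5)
The corollary should follow directly from Theorem \ref{MY_WP} combined with the semi-positivity of the Weil-Petersson current. Under the stated hypotheses, Theorem \ref{MY_WP} gives
\begin{equation*}
    (2(n+1) c_2(X) - n c_1(X)^2) \cdot (-2 \pi c_1(X))^{n-2} = \frac{12(n+1)}{\pi}\int_{X_{can}} [\omega_{WP}] \wedge [\eta_{FS}]^{n-2}.
\end{equation*}
It therefore suffices to show that the right-hand side is non-negative. Since $\frac{12(n+1)}{\pi} > 0$, this reduces to checking that $[\omega_{WP}] \cdot [\eta_{FS}]^{n-2} \geq 0$ on the $(n-1)$-dimensional manifold $X_{can}$.

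On the regular locus $X_{can}^\circ$, meaning away from the image of critical points of $f$, the form $\omega_{WP}$ is a smooth semi-positive $(1,1)$-form. This is because it is the $L^2$-pairing of the harmonic representatives of the Kodaira-Spencer classes, so $\omega_{WP}(v,\oo{v}) = \norm{\rho(v)}^2_{L^2} \geq 0$. By \cite{CS21}, $\omega_{WP}$ extends to a $d$-closed positive $(1,1)$-current on all of $X_{can}$ representing $2\pi c_1(f_*K_{X/X_{can}})$, and this is exactly the class used in the proof of Theorem \ref{MY_WP}. Moreover $\eta_{FS} = \frac{1}{k}\omega_{FS}|_{X_{can}}$ is a smooth Kähler form on the smooth variety $X_{can}$.

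The cohomological pairing can therefore be computed as $\int_{X_{can}} \omega_{WP} \wedge \eta_{FS}^{n-2}$. This wedge product of a closed positive current with a smooth positive $(n-2,n-2)$-form is well defined, because only one factor is singular, and it is a positive measure. Its total mass is therefore $\geq 0$, which gives the inequality.

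The main point needing care is the identification of the intersection number with the integral of the current. One needs the current $\omega_{WP}$ to be globally closed on $X_{can}$, with no extra mass hidden on the discriminant locus beyond what is counted in $c_1(f_*K_{X/X_{can}})$. This is exactly the content of the extension result of \cite{CS21}, which we take as given, so the remaining argument is formal.
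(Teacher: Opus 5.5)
Your proposal is correct and follows the paper's own route. Like the paper, you combine Theorem \ref{MY_WP} with the semi-positivity of the Weil--Petersson current, exactly as in the displayed inequality in the paper's proof of Theorem \ref{MY_equality}; your remarks about pairing the closed positive current $\omega_{WP}$ with the smooth positive form $\eta_{FS}^{n-2}$ just make explicit a step the paper leaves implicit.
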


\noindent We now prove Theorem \ref{MY_equality}.

\begin{proof}
Note that away from singular fibres, $\omega_{WP} = 0$ if and only if $f$ is a holomorphic fiber bundle, (cf. \cite[Prop. 5.7.]{T19}). If $\omega_{WP} = 0$ then equality holds. If equality holds then,
\begin{align*}
    0 &= (2(n+1) c_2(X) - n c_1(X)^2) \cdot (-2\pi c_1(X))^{n-2}\\
    &= \frac{12(n+1)}{\pi}\int_{X_{can}} \omega_{WP} \wedge \eta_{FS}^{n-2} \geq 0
\end{align*}
since $\omega_{WP}$ is semi-positive. Hence, $\omega_{WP} = 0$.
\end{proof}

\noindent For arbitrary Kodaira dimension, we have the following extension.

\begin{theorem}
Let $X$ be any compact K\"ahler manifold with semi-ample $K_X$, smooth canonical model, $X_{can}$, and $0 < \Kod(X) \leq n-2$. If $[\omega_0]$ is rational, or equivalently, $[\omega_0] = 2 \pi c_1(L_0)$ for some ample $\Q$-line bundle $L_0$, then
\begin{align*}
    (2(n+1) c_2(X) - n c_1(X)^2) &\cdot (2 \pi c_1(L_0))^{n-\kappa-2} \cdot (-2 \pi c_1(X))^{\kappa}\\
    &=  24 (n+1) (2\pi )^{n-\kappa-2}(n-\kappa-2)! r_{n-\kappa-2} [\eta_{FS}]^\kappa,
\end{align*}
where $r_{n-\kappa-2}$ is the coefficient of $j^{n-\kappa-2}$ in the polynomial $\rank(f_*(K_{X/X_{can}}\otimes L_0^j))$ for sufficiently large $j\geq 1$.
\end{theorem}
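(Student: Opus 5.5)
Following the proof of Theorem \ref{MY_WP}, I would apply the relative Grothendieck-Riemann-Roch theorem of \cite{BTT85} to $f: X \to X_{can}$, which is legitimate since $X_{can}$ is smooth. The change is that instead of $\OO_X$ I would use the twisted sheaves $f^*(\cdot)\otimes L_0^j$ (passing to a multiple so $L_0$ is an honest line bundle and rescaling at the end), or more precisely $K_{X/X_{can}} \otimes L_0^j$. The relative dimension is $m := n-\kappa \geq 2$. By relative Kodaira/Kawamata vanishing, $L_0^j$ is relatively ample, so for $j \gg 1$ the higher direct images $R^i f_*(K_{X/X_{can}}\otimes L_0^j)$ vanish for $i>0$. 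Hence $Rf_*(K_{X/X_{can}}\otimes L_0^j) = f_*(K_{X/X_{can}}\otimes L_0^j)$, a vector bundle whose rank is the polynomial $P(j)=\sum_k r_k j^k$ from the statement.

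Next I would expand
\begin{equation*}
f_*\big(\ch(K_{X/X_{can}})\, e^{j c_1(L_0)}\, \td(\T_f)\big)
\end{equation*}
in powers of $j$ and compare degree-by-degree. In degree $0$ on $X_{can}$, the rank equals $f_*$ of the degree-$m$ part. Writing $c_1(K_{X/X_{can}}) = -c_1(\T_f)$ and using $\td(\T_f)=1+\tfrac{c_1}{2}+\tfrac{c_1^2+c_2}{12}+\cdots$, the coefficient of $j^{m-2}$ in the rank is
\begin{equation*}
\frac{1}{(m-2)!} f_*\Big(c_1(L_0)^{m-2}\cdot\big[\tfrac{c_1(\T_f)^2}{2} - \tfrac{c_1(\T_f)^2}{2} + \tfrac{c_1(\T_f)^2+c_2(\T_f)}{12}\big]\Big),
\end{equation*}
which equals $\frac{1}{12(m-2)!}f_*\big(c_1(L_0)^{m-2}(c_1(\T_f)^2+c_2(\T_f))\big)$, viewed as a function on $X_{can}$. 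Here the terms $\tfrac{1}{2}c_1(K)^2$, $c_1(K)\cdot\tfrac{c_1(\T_f)}{2}$ and the Todd term combine, and the twist by $K_{X/X_{can}}$ cancels the linear contributions exactly as relative Serre duality did in Theorem \ref{MY_WP}. This yields $r_{m-2}$ as a constant function, i.e. a degree-$0$ class, so I integrate against $[\eta_{FS}]^\kappa$ to obtain $r_{m-2}[\eta_{FS}]^\kappa = \frac{1}{12(m-2)!}\int_X c_1(L_0)^{m-2}(c_1(\T_f)^2+c_2(\T_f))\wedge f^*\eta_{FS}^\kappa$.

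Finally I would repeat the Chern class manipulation from the proof of Theorem \ref{MY_WP}. Using $c_1(X)=c_1(\T_f)+f^*c_1(X_{can})$, $c_2(X)=c_2(\T_f)+c_1(\T_f)f^*c_1(X_{can})+f^*c_2(X_{can})$ and $-2\pi c_1(X)=[f^*\eta_{FS}]$, every term containing a pullback of a positive-degree class from $X_{can}$ dies after wedging with $f^*\eta_{FS}^\kappa$, since it becomes a pullback of a form of degree $>2\kappa$. This leaves $2(n+1)c_2(X)\cdot c_1(L_0)^{m-2}\cdot f^*\eta_{FS}^\kappa$, and the $c_1(X)^2$ term vanishes for the same reason because $c_1(X)$ is itself a pullback. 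Multiplying by $24(n+1)(m-2)!(2\pi)^{m-2}$ gives the stated identity.

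The main obstacle is the bookkeeping in the second step. I need to verify that the degree-$m$ term of $\ch(K_{X/X_{can}})e^{jc_1(L_0)}\td(\T_f)$ at order $j^{m-2}$ really reduces to $\frac{1}{12}(c_1(\T_f)^2+c_2(\T_f))$. If it does not, I would instead twist $\OO_X$ by $L_0^j$ and use relative Serre duality $R^{m}f_*(L_0^{-j})\cong f_*(K_{X/X_{can}}\otimes L_0^j)^*$ to reach the same coefficient. I also need to handle the vanishing of higher direct images and the flatness needed to treat the rank as a genuine polynomial over all of $X_{can}$.
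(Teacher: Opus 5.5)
Your proposal is correct and is essentially the paper's argument. The paper applies relative Grothendieck--Riemann--Roch to $L_0^{-j}$ and uses relative Serre duality with relative Kodaira vanishing to match $(-1)^{n-\kappa}\rank(f_*(K_{X/X_{can}}\otimes L_0^j))$ with the degree-$(n-\kappa)$ part, whereas you work directly on the dual side $K_{X/X_{can}}\otimes L_0^j$; your bookkeeping is right, since the degree-two part of $e^{-c_1(\T_f)}\td(\T_f)$ is exactly $\td_2(\T_f)$, and the subsequent Chern class reduction is identical. Your flatness worry (and the claim that $f_*$ is a vector bundle) is unnecessary, because $\ch_0$ of a coherent sheaf on the connected base $X_{can}$ only records its generic rank, which is what the statement uses.
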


\begin{proof}
Apply the relative Grothendieck-Riemann-Roch theorem to $L_0^{-j}$ where $j \geq 1$ is a positive integer and $L_0^{-1} = L_0^*$, that is,
\begin{equation*}
    \ch(R f_* L_0^{-j}) = f_* (\ch(L_0^{-j}) \td(\T_f)).
\end{equation*}
Since the fibres are $(n-\kappa)$-dimensional
\begin{align*}
    R f_* L_0^{-j} &= \sum_{i = 0}^{n-\kappa} (-1)^i R^i f_* L_0^{-j}
\end{align*}
and by relative Serre duality, $K_{X/X_{can}} = K_X \otimes f^* K_{X_{can}}^{-1}$, the projection formula, and the relative Kodaira vanishing theorem for derived functors, \cite[Cor. 1.3.]{F26},
\begin{align*}
    (R^i f_* L_0^{-j})^* &= R^{n-\kappa-i} f_*( K_{X/X_{can}} \otimes L_0^{j})\\
    &= R^{n-\kappa-i} f_*(K_X \otimes L_0^j)\otimes K_{X_{can}}^{-1} = 0
\end{align*}
for all $n-\kappa > i$, and sufficiently large $j \geq 1$. It follows that 
\begin{equation*}
    R f_* L_0^{-j} = (-1)^{n-\kappa} R^{n-\kappa} f_* L_0^{-j}
\end{equation*}
and
\begin{align*}
    \ch_0(R f_* L_0^{-j}) &= (-1)^{n-\kappa} \ch_0 (R^{n-\kappa} f_* L_0^{-j})\\
    &= (-1)^{n-\kappa}\rank(R^{n-\kappa} f_* L_0^{-j}) = (-1)^{n-\kappa} \rank(f_*(K_{X/X_{can}}\otimes L_0^j)).
\end{align*}
On the other hand, since
\begin{equation*}
    \ch(L_0^{-j}) = 1 - j c_1(L_0) + \frac{(-j)^2}{2!} c_1(L_0)^2 + \dots + \frac{(-j)^n}{n!} c_1(L_0)^n
\end{equation*}
and the fibres are $(n-\kappa)$-dimensional, then
\begin{equation*}
    f_* ((\ch(L_0^{-j}) \td(\T_f))_{n-\kappa}) = (-1)^{n-\kappa} \rank(f_*(K_{X/X_{can}}\otimes L_0^j))
\end{equation*}
and, in particular, the left-hand side is a polynomial in $j$, for sufficiently large $j \geq 1$. Thus the right-hand side is as well and let us denote the coefficient of $j^i$ in $\rank( f_*(K_{X/X_{can}}\otimes L_0^j))$ by $r_i$. Equating the coefficients of $j^{n-\kappa-2}$ yields
\begin{equation*}
    r_{n-\kappa-2} = \frac{1}{(n-\kappa-2)!}f_*\left(\td_2(\T_f) \wedge c_1(L_0)^{n-\kappa-2}\right).
\end{equation*}
Concerning the Miyaoka-Yau quantity, we can argue in a similar manner to the $\Kod(X) = n-1$ case to find
\begin{align*}
    \int_X (2(n+1) c_2(X) &- n c_1(X)^{2} )\wedge (2 \pi c_1(L_0))^{n-\kappa - 2} \wedge (-2\pi c_1(X))^{\kappa}\\
    &= \int_X 2(n+1) c_2(X) \wedge (2 \pi c_1(L_0))^{n-\kappa-2} \wedge f^*\eta_{FS}^\kappa\\
    &= \int_X 2(n+1)( c_2(\T_f) + c_1(\T_f)^2) \wedge (2 \pi c_1(L_0))^{n-\kappa-2} \wedge f^*\eta_{FS}^\kappa\\
    &= \int_{X_{can}}  24 (n+1)(2\pi)^{n-\kappa-2} f_*\left(\td_2(\T_f) \wedge c_1(L_0)^{n-\kappa-2}\right) \eta_{FS}^\kappa\\
    &= \int_{X_{can}} 24 (n+1) (2\pi )^{n-\kappa-2}(n-\kappa-2)! r_{n-\kappa-2} \eta_{FS}^\kappa.
\end{align*}
\end{proof}

\noindent We ask the natural question.

\begin{question}
For any ample line bundle, $L_0$, is $r_{n-\kappa-2} = 0$ if and only if $f: X \to X_{can}$ is a holomorphic fiber bundle away from singular fibres?
\end{question}

\bibliographystyle{amsplain}

\end{document}